\documentclass[12pt]{amsart}       
\usepackage{txfonts}
\usepackage{extarrows}
\usepackage{amssymb}
\usepackage{eucal}
\usepackage{bbm}
\usepackage{graphicx}
\usepackage{amsmath}
\usepackage{amscd}
\usepackage[all]{xy}           
\usepackage{amsfonts,latexsym}
\usepackage{xspace}
\usepackage{epsfig}
\usepackage{float}
\usepackage{color}
\usepackage{shuffle}
\usepackage{fancybox}
\usepackage{colordvi}
\usepackage{multicol}
\usepackage{colordvi}
\usepackage{mathrsfs}
\usepackage{ifpdf}
\ifpdf
  \usepackage[colorlinks,final,backref=page,hyperindex]{hyperref}
\else
  \usepackage[colorlinks,final,backref=page,hyperindex,hypertex]{hyperref}
\fi
\usepackage[active]{srcltx} 
\usepackage{tikz}
\usepackage{graphicx}
\usepackage{enumitem}
\usepackage{amsmath,amssymb}
\usepackage{xcolor}
\usetikzlibrary{arrows.meta,positioning,calc,fit}

\DeclareMathAlphabet{\cmcal}{OMS}{cmsy}{m}{n}
\newcommand{\Part}{\cmcal{P}}

\newtheorem{theorem}{Theorem}[section]
\newtheorem{proposition}[theorem]{Proposition}
\newtheorem{lemma}[theorem]{Lemma}

\newtheorem{prop-def}{Proposition-Definition}[section]
\newtheorem{coro-def}{Corollary-Definition}[section]

\newtheorem{conjecture}[theorem]{Conjecture}

\theoremstyle{definition}
\newtheorem{definition}[theorem]{Definition}
\newtheorem{remark}[theorem]{Remark}

\newcommand{\nc}{\newcommand}
\nc{\tred}[1]{\textcolor{red}{#1}}
\nc{\tblue}[1]{\textcolor{blue}{#1}}
\nc{\tgreen}[1]{\textcolor{green}{#1}}
\nc{\tpurple}[1]{\textcolor{purple}{#1}}
\nc{\btred}[1]{\textcolor{red}{\bf #1}}
\nc{\btblue}[1]{\textcolor{blue}{\bf #1}}
\nc{\btgreen}[1]{\textcolor{green}{\bf #1}}
\nc{\btpurple}[1]{\textcolor{purple}{\bf #1}}
\nc{\NN}{{\mathbb N}}
\nc{\ncsha}{{\mbox{\cyr X}^{\mathrm NC}}} \nc{\ncshao}{{\mbox{\cyr
X}^{\mathrm NC}_0}}

\newcommand{\delete}[1]{}

\nc{\mlabel}[1]{\label{#1}}
\nc{\mcite}[1]{\cite{#1}}
\nc{\mref}[1]{\ref{#1}}
\nc{\meqref}[1]{\eqref{#1}}
\nc{\mbibitem}[1]{\bibitem{#1}}

\delete{
\nc{\mlabel}[1]{\label{#1}{\hfill \hspace{1cm}{\bf{{\ }\hfill(#1)}}}}
\nc{\mcite}[1]{\cite{#1}{{\bf{{\ }(#1)}}}}
\nc{\mref}[1]{\ref{#1}{{\bf{{\ }(#1)}}}}
\nc{\meqref}[1]{\eqref{#1}{{\bf{{\ }(#1)}}}}
\nc{\mbibitem}[1]{\bibitem[\bf #1]{#1}}
}
\nc{\sha}{{\mbox{\cyr X}}}  
\newfont{\scyr}{wncyr10 scaled 550}
\nc{\ssha}{\mbox{\bf \scyr X}}
\nc{\shap}{{\mbox{\cyrs X}}} 
\nc{\shpr}{\diamond}    
\nc{\shp}{\ast} \nc{\shplus}{\shpr^+}
\nc{\shprc}{\shpr_c}    
\nc{\dep}{\mrm{dep}} \nc{\lc}{\lfloor} \nc{\rc}{\rfloor}
\nc{\db}{\leq_{\rm db}} \nc{\bfk}{{\bf k}}

\nc{\cala}{{\mathcal A}} \nc{\calb}{{\mathcal B}}
\nc{\calc}{{\mathcal C}}
\nc{\cald}{{\mathcal D}} \nc{\cale}{{\mathcal E}}
\nc{\calf}{{\mathcal F}} \nc{\calg}{{\mathcal G}}
\nc{\calh}{{\mathcal H}} \nc{\cali}{{\mathcal I}}
\nc{\call}{{\mathcal L}} \nc{\calm}{{\mathcal M}}
\nc{\caln}{{\mathcal N}} \nc{\calo}{{\mathcal O}}
\nc{\calp}{{\mathcal P}} \nc{\calr}{{\mathcal R}}
\nc{\cals}{{\mathcal S}} \nc{\calt}{{\mathcal T}}
\nc{\calu}{{\mathcal U}} \nc{\calw}{{\mathcal W}} \nc{\calk}{{\mathcal K}}
\nc{\calx}{{\mathcal X}} \nc{\CA}{\mathcal{A}}

\nc{\fraka}{{\mathfrak a}} \nc{\frakA}{{\mathfrak A}}
\nc{\frakb}{{\mathfrak b}} \nc{\frakB}{{\mathfrak B}}
\nc{\frakc}{{\mathfrak c}}
\nc{\frakD}{{\mathfrak D}} \nc{\frakF}{\mathfrak{F}}
\nc{\frakf}{{\mathfrak f}} \nc{\frakg}{{\mathfrak g}}
\nc{\frakH}{{\mathfrak H}} \nc{\frakL}{{\mathfrak L}}
\nc{\frakM}{{\mathfrak M}} \nc{\bfrakM}{\overline{\frakM}}
\nc{\frakm}{{\mathfrak m}} \nc{\frakP}{{\mathfrak P}}
\nc{\frakN}{{\mathfrak N}} \nc{\frakp}{{\mathfrak p}}
\nc{\frakS}{{\mathfrak S}} \nc{\frakT}{\mathfrak{T}}
\nc{\frakX}{{\mathfrak X}}

\font\cyr=wncyr10 \font\cyrs=wncyr7
\nc{\li}[1]{\textcolor{blue}{Nan:#1}}
\nc{\lir}[1]{\textcolor{red}{Li:#1}}
\nc{\yi}[1]{\textcolor{blue}{Yi: #1}}
\nc{\xing}[1]{\textcolor{blue}{Xing:#1}}
\nc{\revise}[1]{\textcolor{red}{#1}}
\nc{\nan}[1]{\textcolor{blue}{Nan:#1}}

\numberwithin{equation}{section}
\newcommand{\R}{\mathbb R}
\newcommand{\C}{\mathbb C}
\newcommand{\K}{\mathbb K}

\newcommand{\End}{\operatorname{End}}
\newcommand{\Id}{\operatorname{Id}}
\newcommand{\Ree}{\operatorname{Re}}
\newcommand{\spec}{\operatorname{spec}}

\newcommand{\norm}[1]{\lVert #1\rVert}
\newcommand{\ot}{\otimes}
\nc{\X}{\mathbf{X}}
\nc{\x}{\mathbb{X}}
\nc{\G}{\Gamma}

\begin{document}

\title[Exact Signature Tail Asymptotics for Pure Rough Paths]{Exact Signature Tail Asymptotics for Pure Rough Paths}
%
%
\author{Nannan Li}
\address{School of Mathematics and Statistics, Lanzhou University
Lanzhou, 730000, China
}
\email{linn2024@lzu.edu.cn}

\author{Xing Gao$^{*}$}\thanks{*Corresponding author}
\address{School of Mathematics and Statistics, Lanzhou University
Lanzhou, 730000, China; Gansu Provincial Research Center for Basic Disciplines of Mathematics
and Statistics, Lanzhou, 730070, China
}
\email{gaoxing@lzu.edu.cn}
\begin{abstract}
We prove~\cite[Conjecture 2.12]{BGS20} on the signature tail asymptotics of pure rough paths and extend it to arbitrary reasonable tensor norms. In more details, let
\[
\mathbf X_t=\exp(tl) \,\text{ with }\, l=l_1+\cdots+l_m\,\text{ and }\, l_r\in\mathcal L_r(V),
\]
be a pure $m$-rough path over a finite dimensional real or complex Banach space, and equip the tensor powers of $V$ with arbitrary reasonable tensor algebra norms. We prove that
\[
\limsup_{n\to\infty}\left(\left(\frac{n}{m}\right)!\left\|\pi_n(\exp l)\right\|_n\right)^{m/n}=\|l_m\|_m .
\]
In particular, this identifies the signature tail with the local $m$-variation of the pure rough path.  The upper bound was obtained in~\cite{BGS20}; the main contribution of the paper is the matching lower bound.  Its proof is based on finite dimensional developments and a norming cyclic construction. For every top-level tensor $l_m$, we also build a contractive development in which $\|l_m\|_m$ appears as an eigenvalue at degree $m$.
\end{abstract}

\makeatletter
\@namedef{subjclassname@2020}{\textup{2020} Mathematics Subject Classification}
\makeatother
\subjclass[2020]{
60L20, 
60L10, 
}

\keywords{rough path, signature, reasonable tensor algebra norm. }

\maketitle

\tableofcontents

\setcounter{section}{0}

\allowdisplaybreaks

\section{Introduction}\label{sec:introduction}
Controlled differential equations driven by irregular signals arise naturally in stochastic analysis and related applications.  A fundamental example is provided by stochastic differential equations driven by Brownian motion. More generally, one is led to equations of the form
\begin{equation*}
dY_t=\sum_{i=1}^d V_i(Y_t)\,dX_t^i,
\end{equation*}
where $V_i:\mathbb R^N\to\mathbb R^N$, $X:[0,T]\to\mathbb R^d$, and $Y:[0,T]\to\mathbb R^N$. Rough path theory, initiated by Lyons~\cite{Ly98} and further developed in several directions~\cite{FH20, FV10, Gu04, Gu10, LQ02}, gives a deterministic and analytically robust framework for such equations by enhancing an irregular path with its iterated-integral data.  In this way, it extends the classical Lebesgue--Stieltjes theory for bounded-variation paths continuously with respect to the rough path topology.

The resulting collection of iterated integrals is encoded by the signature, originating from Chen's theory of iterated integrals~\cite{Ch73}. The signature is a fundamental object in rough path theory.  For a path of bounded variation $X:[0,T]\to V$, it is the tensor series
\[
S(X)=1+\sum_{n=1}^{\infty}\int_{0<t_1<\cdots<t_n<T}dX_{t_1}\otimes\cdots\otimes dX_{t_n}.
\]
Lyons' extension theorem extends this construction to rough paths by canonically determining all higher tensor levels from the truncated rough path data; see Theorem~\ref{thm:1} and~\cite{Ly98}.  The significance of the signature is based on the fundamental uniqueness theorem: every weakly geometric rough path is determined by its signature up to tree-like equivalence~\cite{BGLY16, HL10}.  This result shows that the signature encodes essentially all information about the driving signal.  However, the uniqueness theorem is non-constructive and does not by itself explain how one can reconstruct a path, or recover quantitative information about it, from its signature.  This has motivated a line of work on signature inversion, reconstruction, and quantitative signature asymptotics~\cite{BG23, BGS20, Ch18, CLN18, Ge17, LX15, LX18}.

The present paper concerns a quantitative form of this principle.  Instead of asking whether the full signature determines the path, we ask what geometric information can be recovered from the asymptotic behaviour of its high tensor levels.  This is closely related to the following question raised by Boedihardjo, Geng and Souris.\\

\noindent\textbf{Question~\cite{BGS20}.}
Are there explicit and elegant formulae allowing us to recover intrinsic properties of the path from its signature tail asymptotics?\\

For bounded-variation paths, the estimate
\[
\left\|\int_{0<t_1<\cdots<t_n<T}dX_{t_1}\otimes\cdots\otimes dX_{t_n}\right\|\leq\frac{\|X\|_{1\textnormal{-var}}^n}{n!}
\]
suggests normalising the $n$-th signature level by $n!$. This leads to the Hambly--Lyons length recovery problem. For tree-reduced bounded variation paths, one expects the path length to be recovered from the asymptotics of the normalised signature levels.  This problem was implicit in~\cite{HL10} and was later made explicit by Chang, Lyons and Ni~\cite{CLN18}. More recently, Boedihardjo and Geng proved the corresponding formula for a class of planar bounded variation paths by using $\mathrm{SL}_2(\mathbb R)$-developments and angle dynamics~\cite{BG23}.

For a $p$-rough path, the natural factorial scale is instead $(n/p)!=\Gamma(n/p+1)$. This leads to the signature tail functional
\[
L_p(\mathbf X):=\limsup_{n\to\infty}\left(\left(\frac{n}{p}\right)!\left\|X^n_{0,T}\right\|_n\right)^{p/n},
\]
introduced below in \eqref{eq:3}.  The first indication of the geometric meaning of $L_p(\mathbf X)$ was provided by Boedihardjo and Geng~\cite{BG19}.  They proved that, when $\mathbf X$ is the Brownian rough path and $p=2$, the corresponding signature tail asymptotic is a deterministic constant multiple of the quadratic variation.  This suggested that $L_p(\mathbf X)$ should be closely related to a suitable local $p$-variation of the rough path. The use of a limsup is important, since some signature levels may vanish identically, for example when the logarithm of the signature is homogeneous.

Boedihardjo, Geng and Souris~\cite{BGS20} studied this problem for pure rough paths. For a pure rough path $\mathbf X$,
\[
\mathbf X_t=\exp(tl)\in G^{(m)}(V),
\]
they proved the sharp upper estimate
\[
L_m(\mathbf X)\le \|\pi_m(l)\|_m
\]
and obtained uniform lower estimates, together with exact identities in several special cases. Here $\pi_m$ is the canonical projection.  They conjectured that the upper bound should always be attained, and formulated this as follows.

\begin{conjecture}\cite[Conjecture 2.12]{BGS20}\label{conj:1}
For every pure $m$-rough path $\mathbf X_t=\exp(tl)\in G^{(m)}(V)$, the tail asymptotic quantity $L_m(\mathbf X)$ of its signature equals the local $m$-variation of $\mathbf X$.  In view of Proposition~2.10 of~\cite{BGS20}, that is,
\[
L_m(\mathbf X)=\|\pi_m(l)\|_m .
\]
\end{conjecture}

Thus the remaining difficulty is not the upper estimate, which follows from factorial-decay bounds, but the matching lower estimate. Our main result proves Conjecture~\ref{conj:1}, and does so for arbitrary reasonable tensor algebra norms.  More precisely, if $V$ is a finite-dimensional Banach space over $\R$ or $\C$ and $\mathbf X_t=\exp(tl)$ is a pure $m$-rough path, then
\begin{equation*}
L_m(\mathbf X)
=
\|\pi_m(l)\|_m .
\end{equation*}

The paper is organized as follows.  Section~\ref{sec:2} recalls reasonable tensor algebra norms, signatures of rough paths, the tail functional, pure rough paths, and the known upper estimate.  Section~\ref{sec:lower-bound} proves the abstract lower-bound principle using finite dimensional developments and dilation (Proposition~\ref{prop:2}).  Section~\ref{sec:cyclic} constructs the norming cyclic development and completes the proof of the main theorem (Theorem~\ref{thm:3}).

\section{Reasonable tensor norms and known upper estimate}\label{sec:2}
In this section, we introduce the tensor-norm assumptions, the basic notation for pure rough paths, and several elementary estimates needed in the sequel.

Let $V$ be a finite dimensional Banach space over $\K$, where $\K=\R$ or $\C$. We write $V^{\otimes n}$ for the algebraic tensor power and assume that each
$V^{\otimes n}$ is equipped with a norm $\norm{\cdot}_n$.  We put
$V^{\otimes0}=\K$ and $\norm{c}_0=|c|$.

\begin{definition}\label{def:1}
A sequence $\{\norm{\cdot}_n:n\ge1\}$ is called a \emph{sequence of reasonable tensor algebra norms} if
\begin{enumerate}[label=(\roman*), itemsep=0.35em, topsep=0.35em]
\item $\norm{\cdot}_1$ is the given norm on $V$;

\item $\norm{\xi\ot\eta}_{m+n}\le \norm{\xi}_m\norm{\eta}_n$ for all $\xi\in V^{\otimes m}$ and $\eta\in V^{\otimes n}$;

\item $\norm{\varphi\ot\psi}_{(V^{\otimes(m+n)})^*}\le \norm{\varphi}_{(V^{\otimes m})^*}\norm{\psi}_{(V^{\otimes n})^*}$ for all $\varphi\in(V^{\otimes m})^*$ and $\psi\in(V^{\otimes n})^*$;

\item every permutation operator on $V^{\otimes n}$ is an isometry.
\end{enumerate}
\end{definition}

Let $T((V))$ be the infinite tensor algebra consisting of tensor series $\xi=(\xi_0, \xi_1, \ldots)$ with $\xi_m\in V^{\otimes m}$ for each $m$. Given $m\ge 1$, let
\[
T^{(m)}(V):= \bigoplus_{k=0}^{m} V^{\otimes k}
\]
be the \emph{truncated tensor algebra} of degree $n$. For $l\in T^{(m)}(V)$, write $l=l_1+\cdots+l_m$ with $l_r\in V^{\otimes r}$. We next fix the rough-path convention used in the rest of the paper. Denote by  $$\Delta_T=\{(s,t):0\le s\le t\le T\}.$$

\begin{definition}\label{def:2}
\begin{enumerate}
\item Let $m\in \mathbb{Z}_{\geq 1}$. A \emph{multiplicative functional of degree $m$}  is a continuous map
\[
\mathbf X:\Delta_T\longrightarrow T^{(m)}(V),\quad (s,t)\longmapsto \mathbf X_{s,t}=(1,X^1_{s,t},\ldots,X^m_{s,t}),
\]
such that the Chen relation
\[
\mathbf X_{s,u}=\mathbf X_{s,t}\otimes \mathbf X_{t,u}
\]
holds  for all $0\le s\le t\le u\le T$. 

\item  Let $p\in \mathbb{R}_{\geq 1}$ and  $m:=\lfloor p\rfloor$. A multiplicative functional of degree $m$ is called a \emph{$p$-rough path} if
\[
\|\mathbf X\|_{p\textnormal{-var};[0,T]}:=\sum_{k=1}^{m}\Bigg(\sup_{\Part}\sum_{[u,v]\in\Part}\|X^k_{u,v}\|_k^{p/k}\Bigg)^{k/p}<\infty,
\]
where the supremum is taken over all finite partitions $\Part$ of $[0,T]$.
\end{enumerate}
\end{definition}

Indeed, once $\mathbf X_{0,t}$ is given, Chen's relation determines all increments by
\[
\mathbf X_{s,t}=\mathbf X_{0,s}^{-1}\otimes \mathbf X_{0,t},
\]
where the inverse is taken in the truncated tensor algebra. Conversely, the family of increments clearly determines the path $t\mapsto \mathbf X_{0,t}$. We shall use these two descriptions interchangeably.

Recall that there is a natural Lie structure on the tensor algebra given by
\[
[\xi,\eta]:= \xi\otimes \eta-\eta\otimes \xi .
\]
The space of homogeneous Lie polynomials of degree $n$, denoted as $\mathcal{L}_n(V)$, is the norm completion of the algebraic space $\mathcal{L}_n^a(V)$ defined inductively by
\[
\mathcal{L}_1^a(V):=V, \quad \mathcal{L}_{n+1}^a(V):= [V,\mathcal{L}_n^a(V)].
\]
For $m\in \mathbb{Z}_{\geq 1}$, set
\[
\mathcal L^{(m)}(V):=\bigoplus_{k=1}^m \mathcal L_k(V).
\]
This is the step-$m$ free nilpotent Lie algebra over $V$, viewed as a subspace of $T^{(m)}(V)$. The corresponding step-$m$ free nilpotent group is
\[
G^{(m)}(V)=\exp\bigl(\mathcal L^{(m)}(V)\bigr)\subset T^{(m)}(V),
\]
where the exponential is understood in the truncated tensor algebra.

\begin{definition}\label{def:3}
A $p$-rough path is said to be \emph{weakly geometric} if it takes values in the group $G^{(\lfloor p\rfloor)}(V)$.
\end{definition}

An important aspect of rough path theory is the characterization of rough paths in terms of the so-called path signature, which is a generalized notion of iterated path
integrals. For a truncated rough path, the existence and uniqueness of the higher-order signature levels is guaranteed by Lyons' extension theorem~\cite{Ly98}, which we recall in the form needed below.

\begin{theorem} \label{thm:1}
Let $\mathbf{X}=(\mathbf{X}_{s,t})_{0\le s\le t\le T}$ be a $p$-rough path. Then there exists a unique extension of \, $\X$ to a multiplicative functional
\[
\mathbb{X}:\Delta_T\longrightarrow T((V)), \quad (s,t)\mapsto \mathbb{X}_{s,t}=\bigl(1,X^1_{s,t},\ldots,X^{\lfloor p\rfloor}_{s,t}, \ldots, X^n_{s,t}, \ldots\bigr),
\]
whose restriction to $T^{(n)}(V)$ has finite total $p$-variation for all $n\ge \lfloor p\rfloor+1$. Moreover, there exist a universal constant $\beta_p$ depending only on $p$ and a non-negative function $\omega_{\mathbf{X}}(s,t)$ related to the $p$-variation of $\mathbf{X}$, such that
\begin{equation}\label{eq:2}
\bigl\|X^n_{s,t}\bigr\| \le \frac{\omega_{\mathbf{X}}(s,t)^{n/p}}{\beta_p (n/p)!}, \quad \textnormal{for all $n\ge 1$ and $(s,t)\in\Delta_T$},
\end{equation}
where the factorial $(n/p)!$ is defined by using the Gamma function.
\end{theorem}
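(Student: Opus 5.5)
We would follow Lyons' original inductive argument~\cite{Ly98}, extending one tensor level at a time and keeping the factorial bound \eqref{eq:2} as part of the induction hypothesis. First fix the control. Set
\[
\omega_{\mathbf X}(s,t):=C_p\sum_{k=1}^{m}\sup_{\Part\subset[s,t]}\sum_{[u,v]\in\Part}\|X^k_{u,v}\|_k^{p/k},
\]
where $m=\lfloor p\rfloor$ and $C_p$ is a constant to be chosen. This $\omega_{\mathbf X}$ is superadditive and continuous, vanishes on the diagonal, and satisfies $\|X^k_{s,t}\|_k\le \omega_{\mathbf X}(s,t)^{k/p}/(\beta_p(k/p)!)$ for $k\le m$ once $C_p$ is large enough. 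So \eqref{eq:2} holds for $n\le m$, and we argue by induction on $n\ge m+1$.

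\emph{Inductive step.} Suppose the levels up to $n-1$ have been constructed, are multiplicative in $T^{(n-1)}(V)$, and satisfy \eqref{eq:2}. For a partition $D=\{s=t_0<\cdots<t_r=t\}$, form the product
\[
\widehat{\mathbf X}^D_{s,t}:=\widehat{\mathbf X}_{t_0,t_1}\otimes\cdots\otimes\widehat{\mathbf X}_{t_{r-1},t_r}
\]
in $T^{(n)}(V)$, where $\widehat{\mathbf X}_{u,v}$ is $\mathbf X_{u,v}$ with degree-$n$ component set to $0$. By the induction hypothesis, the levels below $n$ of this product do not depend on $D$. Remove a point $t_j$ chosen so that $\omega(t_{j-1},t_{j+1})\le \frac{2}{r-1}\omega(s,t)$; such a point exists by superadditivity. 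The degree-$n$ part then changes by
\[
\sum_{k=1}^{n-1}X^k_{t_{j-1},t_j}\otimes X^{n-k}_{t_j,t_{j+1}}.
\]
By axiom (ii) of Definition~\ref{def:1} and the inductive bounds, its norm is at most
\[
\beta_p^{-2}\sum_{k=1}^{n-1}\frac{a^{k/p}b^{(n-k)/p}}{(k/p)!((n-k)/p)!},\qquad a+b=\omega(t_{j-1},t_{j+1}).
\]

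\emph{Key estimate and convergence.} Lyons' neoclassical inequality
\[
\sum_{k=0}^{n}\frac{a^{k/p}b^{(n-k)/p}}{(k/p)!((n-k)/p)!}\le p\,\frac{(a+b)^{n/p}}{(n/p)!}
\]
bounds each removal by $\frac{2p}{\beta_p^2}\bigl(\frac{2}{r-1}\bigr)^{n/p}\frac{\omega(s,t)^{n/p}}{(n/p)!}$. The boundary terms $k=0,n$ are absent, and the factor $p/\beta_p^2$ is absorbed by taking $\beta_p$ large. Since $n/p>1$, summing over successive removals gives $\sum_r r^{-n/p}<\infty$. The total is bounded by $\frac{\omega^{n/p}}{\beta_p (n/p)!}$ provided $\beta_p\ge 2p\bigl(1+\sum_{r\ge 2}(2/(r-1))^{n/p}\bigr)$; this sum is uniform in $n\ge m+1$ because $n/p\ge (m+1)/p>1$. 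The same estimate applied to refinements shows that $\widehat{\mathbf X}^D_{s,t}$ is Cauchy as the mesh tends to $0$, since the contribution from small subintervals vanishes by continuity of $\omega$. We define $X^n_{s,t}$ as the limit. Chen's relation in $T^{(n)}(V)$ follows by concatenating partitions, \eqref{eq:2} passes to the limit, and finite $p$-variation follows from \eqref{eq:2}.

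\emph{Uniqueness.} Suppose two multiplicative extensions have finite $p$-variation at level $n$ and agree below $n$. By Chen's relation their difference $\psi_{s,t}$ at level $n$ is additive. It is also bounded by $\tilde\omega(s,t)^{n/p}$ for some control $\tilde\omega$, and $n/p>1$, so summing over fine partitions forces $\psi=0$.

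\emph{Main obstacle.} The main obstacle is the neoclassical inequality with a constant independent of $n$. This is what yields a single universal $\beta_p$ rather than one growing with $n$. To prove it we would use Hara--Hino's sharp form via the binomial series for $(a^{1/p}+b^{1/p})$-type expressions, or Lyons' original comparison with the Mittag-Leffler function.
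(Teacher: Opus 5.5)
The paper gives no proof of this statement; it quotes it from Lyons~\cite{Ly98}. Your outline is Lyons's inductive argument, so the route is the same. However, one step fails as written, and it is the one you identify as essential.

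You claim that $\sum_{r\ge2}(2/(r-1))^{n/p}$ is bounded uniformly in $n\ge m+1$ because $n/p\ge (m+1)/p>1$. The $r=2$ term of that sum is $2^{n/p}$, which grows exponentially in $n$. Lowering the exponent to $(m+1)/p$ only decreases the terms whose base $2/(r-1)$ is at most $1$, i.e.\ those with $r\ge 3$. So your requirement $\beta_p\ge 2p\bigl(1+\sum_{r\ge2}(2/(r-1))^{n/p}\bigr)$ cannot be met by a constant depending only on $p$. The induction therefore does not close with a universal $\beta_p$, which is exactly what the statement asserts.

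The repair is standard. For the last removal, when only one interior point remains, do not use the averaging lemma. Then $\omega(t_0,t_2)=\omega(s,t)$ exactly, so that step costs a factor $1$, not $2^{n/p}$. Successive removals give
\[
\bigl\|\pi_n(\widehat{\mathbf X}^D_{s,t})\bigr\|\le\frac{p}{\beta_p^{2}}\Bigl(1+\sum_{r\ge3}\Bigl(\frac{2}{r-1}\Bigr)^{n/p}\Bigr)\frac{\omega(s,t)^{n/p}}{(n/p)!}\le\frac{p}{\beta_p^{2}}\Bigl(1+\sum_{r\ge3}\Bigl(\frac{2}{r-1}\Bigr)^{(m+1)/p}\Bigr)\frac{\omega(s,t)^{n/p}}{(n/p)!}.
\]
Hence $\beta_p\ge p\bigl(1+\sum_{r\ge3}(2/(r-1))^{(m+1)/p}\bigr)$ works for every $n$. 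Use $p^2$ in place of $p$ if you take Lyons's original constant in the neoclassical inequality; the constant $p$ is the Hara--Hino sharpening.

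Two smaller points:
\begin{enumerate}
\item Superadditivity only gives $a+b\le\omega(t_{j-1},t_{j+1})$, not equality. This suffices, because the right-hand side of the neoclassical inequality is increasing in $a+b$.
\item You use continuity of your $\omega_{\mathbf X}$ in both the Cauchy argument and the uniqueness argument. It is not automatic for a two-parameter functional defined as a supremum over partitions, so it should be justified via Chen's relation or cited.
\end{enumerate}
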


\begin{definition}\label{def:4}
The tensor series $\mathbb{X}_{0,T}\in T((V))$ is called the \emph{signature} of $\mathbf{X}$. It is usually denoted as $S(\mathbf{X})$.
\end{definition}

We recall the signature-tail functional associated with a weakly geometric $p$-rough path $\mathbf X$.  The factorial decay estimate~\eqref{eq:2} shows that the natural scale at level $n$ is governed by $\left(\frac{n}{p}\right)!$.  We therefore define
\begin{equation}\label{eq:3}
L_p(\mathbf X):=\limsup_{n\to\infty}\left(\left(\frac{n}{p}\right)!\bigl\|X^n_{0,T}\bigr\|\right)^{p/n}.
\end{equation}
The use of a limsup is essential, since individual signature levels may vanish along infinitely many degrees. In the sequel we focus on pure rough paths of degree $m$, namely rough paths of the form
\[
\mathbf X_t=\exp(tl),\quad 0\le t\le 1,
\]
where $l=l_1+\cdots+l_m\in\mathcal L^{(m)}(V)$ and $l_r\in\mathcal L_r(V)$.

We now recall the definition of pure rough paths and record the elementary properties needed later.

\begin{definition}\cite[Definition 2.8]{BGS20}\label{def:5}
Let $m\in \mathbb{Z}_{\geq 1}$.
A \emph{pure $m$-rough path} is a weakly geometric rough path of the form
\[
\mathbf{X}_t=\exp(tl)\in G^{(m)}(V), \quad 0\le t\le 1,
\]
where $l\in\mathcal{L}^{(m)}(V)$.
\end{definition}

\begin{lemma}\cite[Proposition 2.10]{BGS20}\label{lem:1}
Let $m\ge 1$. A pure $m$-rough path $\mathbf{X}_t=\exp(tl)$ is a rough path with roughness $m$ in the sense of Definition~\ref{def:2}. In addition, the local $m$-variation of \,$\X$ coincides with the norm of the highest degree component of $l$, in the sense that
\[
\lim_{n\to\infty}\sum_{k=1}^{m}\Bigg(\sum_{t_i\in\Part_n}\bigl\|X^k_{t_{i-1},t_i}\bigr\|^{m/k}\Bigg)^{k/m}=\bigl\|\pi_m(l)\bigr\|_m
\]
for any sequence of finite partitions $\Part_n$ over $[0,1]$ whose mesh size tends to zero, where
\[
\pi_m:T^{(m)}(V)\longrightarrow V^{\otimes m}
\]
is the canonical projection.
\end{lemma}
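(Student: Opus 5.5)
Since $\mathbf X_{s,t}=\exp((t-s)l)$, every increment is explicit, and I would compute the local $m$-variation level by level. For each $k\le m$ I would expand
\[
X^k_{s,t}=\pi_k\exp((t-s)l)=\sum_{j\ge1}\frac{(t-s)^j}{j!}\sum_{\substack{r_1+\cdots+r_j=k\\ 1\le r_i\le m}} l_{r_1}\otimes\cdots\otimes l_{r_j}.
\]
The smallest power of $h:=t-s$ that appears is $h^{1}$, coming from $j=1$, $r_1=k$. So $X^k_{s,t}=h\,l_k+O(h^2)$, where the constant in $O(h^2)$ depends only on $\max_r\|l_r\|_r$. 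This uses property (ii) of Definition~\ref{def:1}, $\|\xi\otimes\eta\|\le\|\xi\|\|\eta\|$, to bound each term.

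Next, fix a partition $\Part_n$ with mesh $\delta_n\to0$ and write $h_i=t_i-t_{i-1}$. For $k<m$ the exponent satisfies $m/k>1$, so
\[
\sum_i\|X^k_{t_{i-1},t_i}\|_k^{m/k}\le \sum_i (C h_i)^{m/k}\le C^{m/k}\delta_n^{m/k-1}\sum_i h_i\to0 .
\]
Hence these terms vanish in the limit after raising to the power $k/m$.

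For $k=m$ the exponent is $1$. The estimate $\bigl|\|X^m_{t_{i-1},t_i}\|_m-h_i\|l_m\|_m\bigr|\le C h_i^2$ gives
\[
\sum_i\|X^m_{t_{i-1},t_i}\|_m=\|l_m\|_m\sum_i h_i+O(\delta_n)=\|l_m\|_m+O(\delta_n)\to\|l_m\|_m .
\]
Adding the contributions of all levels yields the stated limit $\|\pi_m(l)\|_m$.

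For the first claim, that $\mathbf X$ is an $m$-rough path, I would argue as follows. Chen's relation is automatic, since $\exp(sl)\otimes\exp(tl)=\exp((s+t)l)$. The path takes values in $G^{(m)}(V)$ because $l$ is a Lie element. The same bound $\|X^k_{s,t}\|_k\le C(t-s)$, valid uniformly on $[0,1]$, then gives
\[
\sum_i \|X^k\|^{m/k}\le C^{m/k}\sum_i h_i^{m/k}\le C^{m/k},
\]
since $h_i\le1$ and $m/k\ge1$. So the $m$-variation is finite.

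There is no real obstacle. The one step needing care is the uniform remainder bound for $X^k_{s,t}$ on $[0,1]$. It follows because the series in $j$ is finite: $j\le k\le m$, so the sum has only finitely many terms and only bounded powers of $h\le1$ occur.
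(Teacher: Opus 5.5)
Your proof is correct: the expansion $X^k_{s,t}=(t-s)\,l_k+O((t-s)^2)$, with the finitely many remainder terms controlled by property (ii) of Definition~\ref{def:1}, yields finite $m$-variation (with Chen's relation and continuity coming directly from the explicit dependence on $t-s$) and the limit, since each level $k<m$ contributes at most $C\delta_n^{1-k/m}$ and level $m$ contributes $\|l_m\|_m+O(\delta_n)$. The paper gives no proof of this lemma and simply imports it from \cite{BGS20}, so there is nothing to compare against; your direct computation is a sound, self-contained verification of the cited statement.
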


\begin{lemma}\cite[Proposition 2.11]{BGS20}\label{lem:2}
Let $\mathbf{X}_t=\exp(tl)$ be a pure $m$-rough path. Then its signature is equal to $\exp(l)$, where the exponential is now taken over the infinite tensor algebra $T((V))$. In addition, up to tree-like equivalence, this is the only weakly geometric rough path whose signature is $\exp(l)$.
\end{lemma}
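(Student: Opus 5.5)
The plan is to exhibit the extension explicitly, check that it satisfies the hypotheses of the uniqueness part of Theorem~\ref{thm:1}, and then read off the signature. Uniqueness up to tree-like equivalence will come from the signature uniqueness theorem of \cite{BGLY16}.

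\textbf{Step 1: the candidate extension.} For $(s,t)\in\Delta_1$ I set $\mathbb Y_{s,t}:=\exp((t-s)l)\in T((V))$, with the exponential series computed in the full tensor algebra. Since $l$ has no degree-$0$ component, $\pi_n(l^{\otimes k})=0$ for $k>n$. Hence each level
\[
\pi_n\bigl(\exp((t-s)l)\bigr)=\sum_{k\le n}\frac{(t-s)^k}{k!}\pi_n(l^{\otimes k})
\]
is a finite polynomial in $t-s$, so $\mathbb Y$ is well defined and continuous. The Chen relation $\mathbb Y_{s,t}\otimes\mathbb Y_{t,u}=\mathbb Y_{s,u}$ holds because $(t-s)l$ and $(u-t)l$ commute, which gives $\exp(a l)\otimes\exp(b l)=\exp((a+b)l)$. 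The truncation map $T((V))\to T^{(m)}(V)$ is an algebra homomorphism, so it maps $\mathbb Y_{s,t}$ to the truncated exponential $\exp((t-s)l)=\mathbf X_{s,t}$. Thus $\mathbb Y$ extends $\mathbf X$. Moreover $\mathbf X_{s,t}\in G^{(m)}(V)$ because $(t-s)l\in\mathcal L^{(m)}(V)$.

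\textbf{Step 2: finite $m$-variation at every level.} Expanding $l^{\otimes k}$ into words $l_{r_1}\otimes\cdots\otimes l_{r_k}$ with $r_1+\cdots+r_k=n$, and using property (ii) of Definition~\ref{def:1}, I would bound
\[
\|\pi_n\mathbb Y_{s,t}\|_n\le\sum_{k}\frac{|t-s|^k}{k!}\sum_{r_1+\cdots+r_k=n}\prod_i\|l_{r_i}\|_{r_i}.
\]
Each $r_i\le m$, so only $k\ge n/m$ contributes, and for $|t-s|\le1$ this gives $\|\pi_n\mathbb Y_{s,t}\|_n\le C_n|t-s|^{n/m}$. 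Therefore $\|\pi_n\mathbb Y_{s,t}\|^{m/n}\le C_n^{m/n}|t-s|$, and the sum over any partition is bounded by $C_n^{m/n}$. So every truncation of $\mathbb Y$ has finite $m$-variation. In particular the case $n\le m$ shows that $\mathbf X$ itself is an $m$-rough path, which is also the first part of Lemma~\ref{lem:1}.

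\textbf{Step 3: conclusion.} By the uniqueness assertion of Theorem~\ref{thm:1}, $\mathbb Y$ is the Lyons extension of $\mathbf X$. Hence $S(\mathbf X)=\mathbb Y_{0,1}=\exp(l)$. For the second claim, let $\mathbf Z$ be any weakly geometric rough path with $S(\mathbf Z)=\exp(l)$. Then $\mathbf Z$ and $\mathbf X$ have the same signature, so the uniqueness theorem of \cite{BGLY16} (extending \cite{HL10}) shows they are tree-like equivalent.

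I expect no real obstacle here. The only point needing care is Step 2: the exponential series must be finite at each level, and the homogeneity count $k\ge n/m$ must give exactly the exponent $n/m$ required for $m$-variation.
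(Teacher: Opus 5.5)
Your proof is correct. The paper gives no argument of its own for this lemma; it imports it from \cite[Proposition 2.11]{BGS20}. What you wrote is the standard argument behind that citation: the explicit multiplicative extension $\exp((t-s)l)$ in $T((V))$, the homogeneity bound $\|\pi_n\mathbb Y_{s,t}\|_n\le C_n|t-s|^{n/m}$ giving finite $m$-variation at every level, uniqueness in Lyons' extension theorem (Theorem~\ref{thm:1}), and the uniqueness theorem of \cite{BGLY16} for the second claim.

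The only step left implicit is that a competitor $\mathbf Z$ may be a $p'$-rough path with $\lfloor p'\rfloor\neq m$. In that case you first lift both paths via Theorem~\ref{thm:1} to the common level $\lfloor\max(m,p')\rfloor$ before applying \cite{BGLY16}, which is routine.
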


The following upper estimate is due to Boedihardjo--Geng--Souris.  Its proof uses only the preceding reasonable tensor norm assumptions and does not require the projective tensor norm.

\begin{lemma}\cite[Theorem 4.1]{BGS20}\label{prop:1}
Let $l\in T^{(m)}(V)$ and  $\X_t=\exp(tl)$.  Then
\[
  L_m(\mathbf{X})\le \norm{\pi_m(l)}_m.
\]
\end{lemma}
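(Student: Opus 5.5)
The plan follows the classical route for factorial-decay upper bounds in rough path theory, specialised to the algebraic structure of $\exp(l)$. By Lemma~\ref{lem:2}, the signature of $\mathbf X$ is $\exp(l)$, so
\[
X^n_{0,1}=\pi_n(\exp l)=\sum_{k\ge 0}\frac{1}{k!}\sum_{\substack{r_1+\cdots+r_k=n\\ 1\le r_i\le m}} l_{r_1}\otimes\cdots\otimes l_{r_k}.
\]
First I apply property (ii) of Definition~\ref{def:1} repeatedly to bound each word: $\|l_{r_1}\otimes\cdots\otimes l_{r_k}\|_n\le\prod_i\|l_{r_i}\|_{r_i}$. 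For $j=1,\dots,m$ let $a_j$ be the number of indices with $r_i=j$. The multinomial count of arrangements then gives
\[
\|X^n_{0,1}\|_n\le \sum_{\substack{a_1,\dots,a_m\ge 0\\ \sum_j j a_j=n}}\ \prod_{j=1}^m\frac{\|l_j\|_j^{a_j}}{a_j!}.
\]
This is the coefficient of $t^n$ in $\prod_{j=1}^m\exp(\|l_j\|_j t^j)$.

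The task is therefore to show that the coefficients $c_n$ of the entire function $F(t)=\exp\bigl(\sum_{j\le m}\lambda_j t^j\bigr)$, where $\lambda_j=\|l_j\|_j\ge 0$, satisfy
\[
\limsup_n\bigl((n/m)!\,c_n\bigr)^{m/n}\le\lambda_m .
\]
The approach is through growth order and type. $F$ has order $m$ and type $\lambda_m$, since $\max_{|t|=R}|F|=\exp(\lambda_m R^m+O(R^{m-1}))$. The classical Lindel\"of--Pringsheim formula gives
\[
\limsup_n\, n\,|c_n|^{m/n}=e\,m\,\lambda_m .
\]
Stirling's formula gives $((n/m)!)^{m/n}\sim n/(em)$, and combining the two yields exactly $\lambda_m$. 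To keep the argument self-contained, I would instead use the Cauchy estimate $c_n\le F(R)R^{-n}$ and choose $R=(n/(m\lambda_m))^{1/m}$. This gives
\[
c_n\le \exp\bigl(n/m+O(n^{(m-1)/m})\bigr)\,\bigl(m\lambda_m/n\bigr)^{n/m}.
\]
Multiplying by $(n/m)!$ and taking the $m/n$ power, the subexponential error disappears in the limit and the bound $\lambda_m$ follows.

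The main obstacle is the degenerate case $\lambda_m=0$, where the choice of $R$ above breaks down. There I would run the same estimate with $\lambda_m$ replaced by an arbitrary $\varepsilon>0$, which is legitimate because $c_n$ is monotone in the $\lambda_j$, and then let $\varepsilon\to0$. The remaining care is to check that the lower-order terms $\lambda_jR^j$ with $j<m$ contribute only $O(n^{(m-1)/m})$ to the exponent, so that they vanish after raising to the power $m/n$. Only property (ii) of the norms is used; properties (iii) and (iv) are not needed.
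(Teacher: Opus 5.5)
Your argument is correct and complete. The paper itself gives no proof of this lemma. It is quoted from Boedihardjo--Geng--Souris, with only the remark that their argument uses nothing beyond the reasonable-norm axioms.

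Your proof makes that remark concrete, and in a sharper form, since only the cross-norm inequality (ii) enters. Expanding $\pi_n(\exp l)$ and bounding term by term reduces everything to the coefficients $c_n$ of the scalar majorant $F(t)=\exp\bigl(\sum_{j\le m}\lambda_j t^j\bigr)$. The saddle-point Cauchy bound $c_n\le F(R)R^{-n}$ at $R=(n/(m\lambda_m))^{1/m}$, together with Stirling for $\Gamma(n/m+1)$, then finishes the argument. The monotone $\varepsilon$-perturbation correctly disposes of the case $\lambda_m=0$.

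Your route is the converse half of the mechanism the paper uses for the lower bound. Lemma~\ref{lem:3} and Proposition~\ref{prop:2} pass from a bound on normalised coefficients to the growth bound $\limsup_{\rho\to\infty}\rho^{-m}C^{-m}\log\|\Gamma^\rho_1\|\le L_m(\mathbf X)$. You pass from the growth $\log F(R)=\lambda_m R^m+O(R^{m-1})$ back to the coefficients. These are the two directions of the order--type/coefficient correspondence behind the Lindel\"of--Pringsheim formula you quote.

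One small point of bookkeeping: Lemma~\ref{lem:2} is stated for Lie $l$, while the lemma allows $l\in T^{(m)}(V)$ (with $l_0=0$ implicit). The identification $S(\mathbf X)=\exp(l)$ still holds. The map $(s,t)\mapsto\exp((t-s)l)$ in $T((V))$ is multiplicative, and its $n$-th level has size $O(|t-s|^{n/m})$. It is therefore the unique Lyons extension. Alternatively, read the statement, as in Theorem~\ref{thm:3}, as the purely algebraic bound on $\|\pi_n(\exp l)\|_n$, which is exactly what you prove.
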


It remains to prove the reverse inequality.  The main point is that, for a general reasonable tensor norm, one cannot rely on the universal property of the projective tensor product.  Instead, we construct a finite-dimensional development explicitly and prove directly that the resulting tensor-level maps are bounded on all degrees.

\section{A tensor-level lower bound from developments}\label{sec:lower-bound}
This section establishes the main estimate needed for the proof and uses it to prove the main theorem.
Given a Banach space $W$, we use the notation $\End(W)$ (resp. $\operatorname{Aut}(W)$) to denote the space of continuous linear endomorphisms (resp. automorphisms) over $W$, equipped with the operator norm.

\begin{definition}\cite[Definition 4.5]{BGS20}\label{def:6}
Let $V$ be a  Banach space over $\K$.
\begin{enumerate} 
\item A \emph{Lie algebraic development} $\Phi$ of $V$ consists of a linear map $F:V\longrightarrow \mathfrak{g}$ into a Lie algebra $\mathfrak{g}$ over $\K$ and a representation $\rho:\mathfrak{g}\longrightarrow \operatorname{End}(W)$ of $\mathfrak{g}$ on a  Banach space $W$ over $\K$ such that
\[
\Phi=\rho\circ F
\]
is continuous. 

\item The Lie algebraic development $\Phi$ is said to be \emph{finite dimensional} if $\mathfrak{g}$ and $W$ are both finite dimensional. 
    
\item In situations when the intermediate Lie algebra $\mathfrak{g}$ is not relevant, we simply refer to $\Phi:V\to \operatorname{End}(W)$ as a \emph{development}.
\end{enumerate}
\end{definition}

Let $W$ be a finite dimensional Banach space over $\K$ and let $\Phi:V\to\End(W)$ be linear.  By the universal property of tensor product, for every $n\ge1$, $\Phi$  induces a continuous linear map
\[
\Phi^{(n)}:V^{\otimes n}\longrightarrow \End(W)
\]
such that
\[
\Phi^{(n)}(v_1\otimes\cdots\otimes v_n)
=
\Phi(v_1)\cdots\Phi(v_n),
\]
and we assume that
\begin{equation}\label{eq:4}
\norm{\Phi^{(n)}}_{V^{\otimes n}\to\End(W)}\le C^n, \quad n\ge1,
\end{equation}
for some $C>0$.  We then extend $\Phi$ multiplicatively to tensor series by
\begin{equation}\label{eq:4-1}
\Phi\bigl((\xi_0,\xi_1,\xi_2,\ldots)\bigr):=\xi_0\Id+\sum_{n\ge1}\Phi^{(n)}(\xi_n)
\end{equation}
whenever the series converges.

We now recall the relevant statement from~\cite{BGS20}. Under the given development $\Phi$, every rough path $(\mathbf{X}_t)_{0\le t\le T}$ over $V$ can be developed onto the automorphism group $\operatorname{Aut}(W)$ by solving the linear ODE
\begin{equation*}
\begin{cases}
d\Gamma_t=\Gamma_t\cdot \Phi(d\X_t), \quad 0\le t\le T,\\
\Gamma_0=\operatorname{Id}.
\end{cases}
\end{equation*}
Using Picard's iteration, it is easily seen that
\begin{align*}
\Gamma_t&\ =\sum_{n=0}^{\infty}\int_{0<t_1<\cdots<t_n<t}\Phi(d\X_{t_1})\cdots \Phi(d\X_{t_n}) \\
&\ =\sum_{n=0}^{\infty}\Phi^{(n)}\left(\int_{0<t_1<\cdots<t_n<t}d\X_{t_1}\otimes\cdots\otimes d\X_{t_n}\right) \\
&\ =\Phi(\x_{0,t}),
\end{align*}
where $\x_{0,t}$ is the Lyons extension of $\mathbf{X}$ given by Theorem~\ref{thm:1}. Note that by the factorial decay estimate in the same theorem, $\Phi(\x_{0,t})$ is well defined. In particular, we have
\[
\Gamma_T=\Phi(S(\mathbf{X})).
\]

We shall use the following elementary estimate to control the signature expansion of a development.

\begin{lemma}\label{lem:3}
For every $m\in \mathbb{Z}_{\geq 1}$, there is a constant $K_m>0$ such that
\begin{equation}\label{eq:10+1}
\sum_{n=0}^{\infty}\frac{x^n}{(n/m)!}\le K_m(1+x)^m\exp(x^m), \quad x\ge0.
\end{equation}
Consequently,
\[
\limsup_{x\to\infty}x^{-m}\log\Bigg(\sum_{n=0}^{\infty}\frac{x^n}{(n/m)!}\Bigg)\le1.
\]
\end{lemma}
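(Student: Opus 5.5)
We prove the elementary inequality \eqref{eq:10+1} by grouping the summation index according to its residue modulo $m$. Write $n=mk+j$ with $k\ge0$ and $0\le j\le m-1$, so that
\[
\sum_{n=0}^{\infty}\frac{x^n}{(n/m)!}=\sum_{j=0}^{m-1}x^j\sum_{k=0}^{\infty}\frac{(x^m)^k}{\Gamma(k+j/m+1)}.
\]
The aim is to compare each inner sum with $\sum_k (x^m)^k/k! = \exp(x^m)$.

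The key input is a uniform lower bound for the Gamma function. For $k\ge1$ and $0\le s\le 1$ we use $\Gamma(k+s+1)\ge c\,\Gamma(k+1)=c\,k!$ for some constant $c>0$. This holds because $\Gamma$ is increasing on $[2,\infty)$, and since $k+s+1\ge2$ we even get $c=1$ for $k\ge1$. For $k=0$ the quantity $\Gamma(s+1)$ is bounded below on $[0,1]$ by $\min_{[1,2]}\Gamma>0$ (approximately $0.8856$). Hence there is a constant $c_0>0$ with $\Gamma(k+j/m+1)\ge c_0\,k!$ for all $k\ge0$ and all $j$. Consequently each inner sum is at most $c_0^{-1}\exp(x^m)$.

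Summing over $j$ gives
\[
\sum_n\frac{x^n}{(n/m)!}\le c_0^{-1}\exp(x^m)\sum_{j=0}^{m-1}x^j.
\]
Since $\sum_{j=0}^{m-1}x^j\le (1+x)^{m-1}\cdot m\le m(1+x)^m$ (each $x^j\le(1+x)^m$), we obtain \eqref{eq:10+1} with $K_m=m/c_0$.

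For the consequence, take logarithms:
\[
x^{-m}\log\Bigl(\sum_n\frac{x^n}{(n/m)!}\Bigr)\le x^{-m}\log K_m+m\,x^{-m}\log(1+x)+1.
\]
The first two terms tend to $0$ as $x\to\infty$, which gives the claimed limsup bound of $1$.

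There is no real obstacle. The only point needing care is the uniform-in-$j$ Gamma comparison at $k=0$, which the positive minimum of $\Gamma$ on $[1,2]$ handles.
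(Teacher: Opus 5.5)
Your proposal is correct and follows essentially the same route as the paper: the residue decomposition $n=mk+j$, a uniform bound $\Gamma(k+j/m+1)\ge c\,k!$, and comparison of each inner sum with $\exp(x^m)$. The only difference is that you get the Gamma bound from the monotonicity of $\Gamma$ on $[2,\infty)$ and its positive minimum on $[1,2]$, rather than from Stirling's asymptotics as the paper does. This gives an explicit constant and makes the paper's separate treatment of $0\le x\le 1$ unnecessary.
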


\begin{proof}
For $0\le x\le1$, we have $x^n\le 1$ and so
\[
\sum_{n=0}^{\infty}\frac{x^n}{(n/m)!}\le \sum_{n=0}^{\infty}\frac{1}{(n/m)!}=\sum_{n=0}^{\infty}\frac{1}{\Gamma(n/m +1)}<\infty.
\]
Notice that 
$$(1+x)^m\exp(x^m)\ge 1.$$ 
For sufficiently large 
$$K_m>\sum_{n=0}^{\infty}\frac{1}{(n/m)!},$$
we have
\[
\sum_{n=0}^{\infty}\frac{x^n}{(n/m)!}\le K_m(1+x)^m\exp(x^m).
\]

For $x\ge 1$, decompose the sum according to $n=qm+j$, where $q\ge0$ and $0\le j\le m-1$. Then 
\[
(n/m)!=\Gamma\left(\frac{n}{m}+1\right)=\Gamma\left(\frac{qm+j}{m}+1\right)=\Gamma(q+j/m+1).
\]
By Stirling formula, when $x\to \infty$,
\[
\Gamma(x+1)\sim \sqrt{2\pi x}\left(\frac{x}{e} \right) ^x.
\]
Taking $x=q+a$, then 
$$\Gamma(q+a+1)\sim \sqrt{2\pi (q+a)}\left(\frac{q+a}{e} \right) ^{q+a}.$$
On the other hand, taking $x=q$, then $\Gamma(q+1)\sim \sqrt{2\pi q}\left(\frac{q}{e} \right) ^{q}$. We obtained the ratio 
\begin{equation}\label{eq:5}
\frac{\Gamma(q+a+1)}{\Gamma(q+1)}\sim \frac{\sqrt{2\pi (q+a)}\left(\frac{q+a}{e} \right) ^{q+a}}{\sqrt{2\pi q}\left(\frac{q}{e} \right) ^{q}}=\left( \frac{q+a}{q}\right)^{1/2}e^{-a}\frac{(q+a)^{q+a}}{q^q}.
\end{equation}
We continue to break down. By 
$$(q+a)^{q+a}=q^{q+a}(1+a/q)^{q+a},$$
we get
\begin{equation}\label{eq:6}
\frac{(q+a)^{q+a}}{q^q}=q^a\left(1+\frac{a}{q}\right)^{q+a}.
\end{equation}
Substituting \eqref{eq:6} into \eqref{eq:5},
\begin{equation}\label{eq:7}
\frac{\Gamma(q+a+1)}{\Gamma(q+1)}\sim q^a\left(1+\frac{a}{q}\right)^{q+a+1/2}e^{-a}.
\end{equation}
Since 
$$(1+a/q)^q \to e^a,$$
we have 
$$(1+a/q)^{q+a+1/2}\to e^a.$$ 
Applying this conclusion, transform \eqref{eq:7} into
\[
\frac{\Gamma(q+a+1)}{\Gamma(q+1)}\sim q^a.
\]
Let 
$$R_{j, q}:=\Gamma(q+j/m+1)/\Gamma(q+1).$$
Then $R_{j, q} \sim q^{j/m}$. Let's conduct a detailed analysis of $R_{j, q}$. If $j=0$, then 
$$R_{0, q}\to 1.$$
If $j> 0$, then 
$$R_{j, q} \sim q^{j/m}\to \infty.$$
So for each fixed value of $j$, $R_{j, q}$ cannot approach 0. Therefore, it has a positive lower bound of 
$$\inf_{q>0}R_{j, q}>0.$$ 
Since $j$ takes only finitely many values $0, \dots, m$, we choose the smallest one:
\[
c_m:=\min_{0\le j\le m-1}\inf_{q>0}R_{j, q}>0.
\]
Then
\begin{equation}\label{eq:8}
\Gamma(q+j/m+1)\ge c_m \G(q+1)=c_mq!
\end{equation}
for a constant $c_m>0$ depending only on $m$.
And so
\[
\sum_{n=0}^{\infty}\frac{x^n}{(n/m)!}\overset{\eqref{eq:8}}{\le}c_m^{-1}\sum_{n=0}^{\infty}\frac{x^n}{q!}=c_m^{-1}\sum_{q=0}^{\infty}\sum_{j=0}^{m-1}\frac{x^{qm+j}}{q!}
=c_m^{-1}\sum_{j=0}^{m-1}x^j\sum_{q=0}^{\infty}\frac{x^{qm}}{q!}.
\]
By 
$$\sum_{q=0}^{\infty}\frac{x^{qm}}{q!}=\exp(x^m),$$
we have
\[
\sum_{n=0}^{\infty}\frac{x^n}{(n/m)!}\le c_m^{-1}\Bigg( \sum_{j=0}^{m-1}x^j\Bigg) \exp(x^m).
\]
Hence, by 
$$\sum_{j=0}^{m-1}x^j\overset{x\ge 1}{\le} mx^{m-1},$$
we have
\[
\sum_{n=0}^{\infty}\frac{x^n}{(n/m)!}\le c_m^{-1}mx^{m-1}\exp(x^m)\le c_m^{-1}(1+x)^m\exp(x^m)=:K_m(1+x)^m\exp(x^m)
\]
which implies the first estimate. We take the logarithm of it, divide both sides by $x^m$, and then take the limit, which gives us 
\[
\limsup_{x\to\infty}x^{-m}\log\Bigg(\sum_{n=0}^{\infty}\frac{x^n}{(n/m)!}\Bigg)\le1.\qedhere
\]
\end{proof}

We adapt the development estimate from~\cite{BGS20}.  The only change needed for our purposes is that the denominator is normalized by the tensor-level operator bound of the chosen development, which is better suited to arbitrary reasonable tensor norms.  Let $\mathbf X$ be a $p$-rough path and let $\rho>0$. We write
\[
\delta_\rho(\mathbf X_t)=\bigl(1,\rho X_t^1,\ldots,\rho^{\lfloor p\rfloor}X_t^{\lfloor p\rfloor}\bigr)
\]
for the dilated path, and denote by $(\Gamma_t^\rho)_{0\le t\le T}$ the development of $\delta_\rho(\mathbf X_t)$ under $\Phi$.

\begin{proposition}\label{prop:2}
Let $\X_t=\exp(tl)$ be a pure $m$-rough path over $V$, and let $l_m:=\pi_m(l)$.  Suppose that $\Phi$ satisfies \eqref{eq:4}. Then
\begin{equation}\label{eq:11}
L_m(\X) \ge \frac{\sup\{\Ree\lambda:\lambda\in\spec(\Phi^{(m)}(l_m))\}}{C^m},
\end{equation}
where $\spec(A)$ denotes the set of eigenvalues of the finite-dimensional operator $A$.
\end{proposition}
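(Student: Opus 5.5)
Let $L:=L_m(\X)$, and denote the right-hand side of \eqref{eq:11} by $\lambda_*/C^m$. We may assume $L<\infty$ (which holds by Lemma~\ref{prop:1}) and $\lambda_*>0$. The plan is to compare two growth rates of the development $\Gamma^\rho_1$ of the dilated path as $\rho\to\infty$: an upper bound from the signature tail, and a lower bound from the spectrum of $\Phi^{(m)}(l_m)$.

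\textbf{Upper bound.} The dilated path $\delta_\rho(\X_t)=\exp(t\,\delta_\rho l)$ is again pure, with $\delta_\rho l=\sum_r\rho^r l_r$. By Lemma~\ref{lem:2} its signature is $\exp(\delta_\rho l)$, whose level-$n$ component is $\rho^n\pi_n(\exp l)$, since dilation is an algebra homomorphism on $T((V))$. Hence
\[
\Gamma^\rho_1=\Phi(\exp(\delta_\rho l))=\sum_{n\ge0}\rho^n\Phi^{(n)}(\pi_n(\exp l)).
\]
Fix $\varepsilon>0$. By definition of the limsup there is $A_\varepsilon$ with $\|\pi_n(\exp l)\|_n\le A_\varepsilon (L+\varepsilon)^{n/m}/(n/m)!$ for all $n$. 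Combining this with \eqref{eq:4}, we get
\[
\|\Gamma^\rho_1\|\le A_\varepsilon\sum_n\frac{x^n}{(n/m)!},\qquad x=C\rho(L+\varepsilon)^{1/m}.
\]
Lemma~\ref{lem:3} then yields
\[
\limsup_{\rho\to\infty}\rho^{-m}\log\|\Gamma^\rho_1\|\le C^m(L+\varepsilon).
\]

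\textbf{Lower bound.} Here I use that $\Gamma^\rho_1$ is computed by an ODE in finite dimensions. Since $\Phi$ is linear and multiplicative, $\Phi(\exp(\delta_\rho l))=\exp(\Phi(\delta_\rho l))$, where the right-hand side is a matrix exponential; this uses that the series converge absolutely by \eqref{eq:4}, so rearrangement is legitimate. Thus $\Gamma^\rho_1=\exp(M_\rho)$ with
\[
M_\rho=\rho^m\bigl(B+\textstyle\sum_{r<m}\rho^{r-m}\Phi^{(r)}(l_r)\bigr),\qquad B=\Phi^{(m)}(l_m).
\]
By continuity of eigenvalues, $M_\rho/\rho^m\to B$, so there are eigenvalues $\mu_\rho$ of $M_\rho/\rho^m$ with $\Ree\mu_\rho\to\lambda_*$. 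Then $e^{\rho^m\mu_\rho}$ is an eigenvalue of $\exp(M_\rho)$, and the operator norm dominates the spectral radius, so
\[
\|\Gamma^\rho_1\|\ge e^{\rho^m\Ree\mu_\rho}.
\]
This gives $\liminf_{\rho\to\infty}\rho^{-m}\log\|\Gamma^\rho_1\|\ge\lambda_*$. In the real case I pass to the complexification, since the spectral-radius bound holds up to an equivalent-norm constant, which is harmless after taking $\rho^{-m}\log$.

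\textbf{Conclusion.} Combining the two bounds gives $\lambda_*\le C^m(L+\varepsilon)$; letting $\varepsilon\to0$ yields \eqref{eq:11}.

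\textbf{Main obstacle.} The delicate step is the identification $\Phi(\exp(\delta_\rho l))=\exp(\Phi(\delta_\rho l))$ in the presence of only the levelwise bound \eqref{eq:4}. The lower-order terms are then handled by eigenvalue continuity, which avoids any commutation issue between $B$ and the $\Phi^{(r)}(l_r)$.
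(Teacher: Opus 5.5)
Your proposal is correct and takes essentially the same route as the paper. Both proofs bound $\rho^{-m}\log\|\Gamma^\rho_1\|$ from above using the signature tail and Lemma~\ref{lem:3}, and from below using $\Gamma^\rho_1=\exp\bigl(\rho^m T(\rho)\bigr)$, eigenvalue continuity and the spectral-radius bound. The only cosmetic difference is that you absorb the finitely many early levels into a constant $A_\varepsilon$ with slack $L+\varepsilon$, whereas the paper works with $L_N=\sup_{n\ge N}\bigl((n/m)!\,\|X^n\|_n\bigr)^{m/n}$ and a polynomial $P_N(\rho)$ and then takes $\inf_N$.
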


\begin{proof}
Let $X^n$ denote the $n$-th signature level of $\X$.  We set $X^0_{0,1}=1$, and define $\Phi^{(0)}:\mathbb K\to \operatorname{End}(W)$ by $\Phi^{(0)}(c)=c\Id$. For the dilated path $\delta_\rho (\X)$, the developed endpoint is
\begin{equation*}
\G^\rho_1=\Phi\left(S\left(\delta_\rho (\X) \right)  \right)\overset{\eqref{eq:4-1}}{=}\Id+\sum_{n=1}^{\infty}\rho^n\Phi^{(n)}(X^n_{0,1})
=\sum_{n=0}^{\infty}\rho^n\Phi^{(n)}(X^n_{0,1}).
\end{equation*}
The series is convergent by the factorial decay estimate. By \eqref{eq:2}, we have
\[
\|\rho^n\Phi^{(n)}(X^n_{0,1})\|\le \rho^n\|\Phi^{(n)}\|\,\|X^n_{0,1}\|_n\overset{\eqref{eq:4}}{\le} \rho^nC^n\|X^n_{0,1}\|_n
\le \rho^nC^n\frac{\omega_{\mathbf{X}}(0,1)^{n/m}}{\beta_m (n/m)!}.
\]
That is, 
\[
\|\G^\rho_1\|\le \sum_{n=0}^{\infty}\|\rho^n\Phi^{(n)}(X^n_{0,1})\|\le \frac{1}{\beta_m }\sum_{n=0}^{\infty}\frac{\left(\rho C\omega_{\mathbf{X}}(0,1)^{1/m}\right)^n }{(n/m)!}
\overset{\eqref{eq:10+1}}{<} \infty.
\]
For convenience, we define 
$$\norm{X^n}_n:=\|X^n_{0, 1}\|_n.$$
For $N\ge1$, set
\begin{equation}\label{eq:13}
L_N:=\sup_{n\ge N}\left(\left(\frac n m\right)!\norm{X^n}_n\right)^{m/n}.
\end{equation}
By this definition, we have 
$$\left(\left(\frac n m\right)!\norm{X^n}_n\right)^{m/n}\le L_N,$$
and so
\begin{equation*}
\|X^n\|_n\le \frac{L_N^{n/m}}{(n/m)!}.
\end{equation*}
By this estimate, we obtain
\begin{equation}\label{eq:14}
\|\Phi^{(n)}(X^n)\|\le \|\Phi^{(n)}\|\,\|X^n\|\overset{\eqref{eq:4}}{\le}C^n\|X^n\|\le C^n\frac{L_N^{n/m}}{(n/m)!}.
\end{equation}
So
\begin{equation}\label{eq:15}
 \norm{\G^\rho_1}\le \sum_{n=0}^{N-1}\rho^n\norm{\Phi^{(n)}(X^n)}+\sum_{N}^{\infty}\rho^n\norm{\Phi^{(n)}(X^n)}\overset{\eqref{eq:14}}{\le}\sum_{n=0}^{N-1}\rho^n\norm{\Phi^{(n)}(X^n)}+\sum_{n=N}^{\infty}\frac{(\rho C)^n L_N^{n/m}}{(n/m)!}.
\end{equation}

We now estimate the exponential growth rate of $\norm{\Gamma^\rho_1}$ as $\rho\to\infty$.  Let 
$$P_N(\rho):=\sum_{n=0}^{N-1}\rho^n\norm{\Phi^{(n)}(X^n)}$$ 
and $x:=\rho CL_N^{1/m}$. Then 
\begin{equation}\label{eq:16}
\sum_{n=N}^{\infty}\frac{(\rho C)^n L_N^{n/m}}{(n/m)!}\le \sum_{n=0}^{\infty}\frac{(\rho CL_N^{1/m})^n }{(n/m)!}=\sum_{n=0}^{\infty}\frac{x^n }{(n/m)!}\overset{\eqref{eq:10+1}}{\le} K_m(1+x)^me^{x^m}.
\end{equation}
Substituting \eqref{eq:16} into \eqref{eq:15}, we have 
\[
\norm{\Gamma^\rho_1}\le P_N(\rho)+K_m(1+\rho CL_N^{1/m})^m\exp\left( (\rho C)^mL_N\right).
\]
Taking the logarithm of both sides of the above equation and dividing by $\rho^m C^m$, we get 
\[
\frac{\log\norm{\Gamma^\rho_1}}{\rho^m C^m}\le \frac{\log\left[ P_N(\rho)+K_m(1+\rho CL_N^{1/m})^m\exp\left( (\rho C)^mL_N\right)\right] }{\rho^m C^m}.
\]
Notice that 
\begin{align*}
&\ \lim_{\rho\to \infty}\frac{\log\left[ P_N(\rho)+K_m(1+\rho CL_N^{1/m})^m\exp\left( (\rho C)^mL_N\right)\right] }{\rho^m C^m}\\
=&\  \lim_{\rho\to \infty}\frac{\log\left[K_m(1+\rho CL_N^{1/m})^m\exp\left( (\rho C)^mL_N\right)\right] }{\rho^m C^m} 
\hspace{2cm}  \left(\text{by $\lim_{\rho\to \infty}\frac{\log P_N(\rho)}{\rho^m }=0$}\right)\\
=&\  \lim_{\rho\to \infty}\frac{\log\left[K_m(1+\rho CL_N^{1/m})^m\right]+(\rho C)^mL_N }{\rho^m C^m}\\
=&\  \lim_{\rho\to \infty}\frac{(\rho C)^mL_N }{\rho^m C^m}\hspace{2cm}  \Bigg(\text{by $\lim_{\rho\to \infty}\frac{\log\left[K_m(1+\rho CL_N^{1/m})^m\right]}{\rho^m }=0$}\Bigg)\\
=&\  L_N.
\end{align*}
Therefore
\[
 \limsup_{\rho\to\infty}\frac{\log\norm{\Gamma^\rho_1}}{\rho^mC^m}\le L_N.
\]
Since this estimate holds for every $N\ge1$, we obtain
\[
 \limsup_{\rho\to\infty}\frac{\log\norm{\Gamma^\rho_1}}{\rho^mC^m}\le \inf_{N\ge 1}L_N.
\]
Since 
\[
\inf_{N\ge 1}L_N\overset{\eqref{eq:13}}{=}\inf_{N\ge 1}\sup_{n\ge N}\left(\left(\frac n m\right)!\norm{X^n}_n\right)^{m/n}= \lim_{N\to \infty}\sup_{n\ge N}\left(\left(\frac n m\right)!\norm{X^n}_n\right)^{m/n}\overset{\eqref{eq:3}}{=}L_m(\X),
\]
we have
\begin{equation}\label{eq:17}
L_m(\X)=\inf_{N\ge 1}L_N\ge \limsup_{\rho\to\infty}\frac{\log\norm{\Gamma^\rho_1}}{\rho^mC^m}.
\end{equation}

On the other hand, by $\X_t=\exp(tl)$, we have
\[
\G^\rho_1=\Phi\left(S\left(\delta_\rho (\X) \right)  \right)=\Phi\left(\delta_\rho S\left(\X \right)  \right)=\Phi\left(\delta_\rho \exp(l)  \right).
\]
Since 
$$\exp(l)=\sum_{k=0}^{\infty}\frac{l^k}{k!}$$ 
and $\delta_\rho$ is an algebraic homomorphism, we obtain
\[
\Phi\left(\delta_\rho \exp(l)  \right)=\Phi\Bigg(\delta_\rho \Bigg(\sum_{k=0}^{\infty}\frac{l^k}{k!} \Bigg)   \Bigg)=\Phi\Bigg(\sum_{k=0}^{\infty}\frac{(\delta_\rho l)^k}{k!} \Bigg)  =\Phi\left( \exp(\delta_\rho l)  \right).
\]
Notice that 
$$l=l_1+\cdots+l_m,  \quad l_r\in \mathcal{L}_r(V)\subset V^{\otimes r}$$ 
and 
$$ \delta_\rho l=\sum_{r=1}^m\rho^rl_r.$$
Hence 
\[
\G^\rho_1=\Phi\Bigg( \exp\Bigg(\sum_{r=1}^m\rho^rl_r\Bigg)  \Bigg)=\exp\Bigg(\Phi \Bigg(\sum_{r=1}^m\rho^rl_r\Bigg)  \Bigg)=\exp\Bigg(\sum_{r=1}^m \rho^r\Phi^{(r)}(l_r)\Bigg).
\]
Let
\[
 T(\rho):=\Phi^{(m)}(l_m)+\rho^{-1}\Phi^{(m-1)}(l_{m-1})+\cdots+\rho^{-(m-1)}\Phi^{(1)}(l_1).
\]
For every eigenvalue $\lambda$ of $\Phi^{(m)}(l_m)$, finite dimensional perturbation theory gives a continuous eigenvalue branch $\lambda(\rho)$ of $T(\rho)$ with $\lambda(\rho)\to\lambda$. Then
\begin{equation}\label{eq:18}
\norm{\Gamma^\rho_1}=\norm{\exp(\rho^mT(\rho))}\ge |\exp\big(\rho^m\lambda(\rho)\big)|=\exp\big(\rho^m\Ree\lambda(\rho)\big).
\end{equation}
Substituting \eqref{eq:18} into \eqref{eq:17}, we have 
\[
L_m(\X)\ge \limsup_{\rho\to\infty}\frac{\rho^m\Ree\lambda(\rho)}{\rho^mC^m}=\limsup_{\rho\to\infty}\frac{\Ree\lambda(\rho)}{C^m}.
\]
Thus, for every $\lambda\in\spec(\Phi^{(m)}(l_m))$,
\[
L_m(\X)\ge \frac{\Ree\lambda}{C^m}.
\]
Taking the supremum over $\lambda\in\spec(\Phi^{(m)}(l_m))$ gives \eqref{eq:11}.
\end{proof}

\section{The norming cyclic development}\label{sec:cyclic}
The aim of this section is to prove the reverse inequality in the main theorem.  For this purpose we construct, for each $a\in V^{\otimes m}$, a finite-dimensional development $\Phi$ such that \eqref{eq:4} holds with $C=1$ and $\norm{a}_m$ is an eigenvalue of $\Phi^{(m)}(a)$. 

\subsection{Construction in the complex case}
Assume that the ground field $\K$ is $\C$ in this subsection.
The following elementary contraction proposition is the only point where the dual part of the definition of reasonable tensor norms is used. 

\begin{proposition}\label{prop:3}
Let $f\in (V^{\otimes m})^*$ with $\norm{f}\le1$.  For every $r\ge0$, define 
$$C_f^{m, r}:V^{\otimes(m+r)}\to V^{\otimes r}$$ 
on simple tensors by
\[
C_f^{m, r}(u\ot \eta):=f(u)\eta, \quad u\in V^{\otimes m},\quad \eta\in V^{\otimes r},
\]
and extend linearly.  Then
\[
\norm{C_f^{m, r}\zeta}_r\le \norm{\zeta}_{m+r}, \quad \zeta\in V^{\otimes(m+r)}.
\]
\end{proposition}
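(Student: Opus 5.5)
We argue by duality: we bound $\norm{C_f^{m,r}\zeta}_r$ by testing it against norm-one functionals on $V^{\otimes r}$, and then use axiom (iii) of Definition~\ref{def:1}. The case $r=0$ needs separate wording, since then $C_f^{m,0}=f$. Here the claim is just $|f(\zeta)|\le\norm{f}\,\norm{\zeta}_m\le\norm{\zeta}_m$, which holds because $\norm{f}\le 1$ is measured in the dual norm of $\norm{\cdot}_m$.

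First, $C_f^{m,r}$ is well defined. It is the composition of the canonical identification $V^{\otimes(m+r)}\cong V^{\otimes m}\otimes V^{\otimes r}$ with the map $f\otimes\mathrm{id}$, and $f\otimes\mathrm{id}$ is well defined by the universal property of the algebraic tensor product. Since $V$ is finite dimensional, every $V^{\otimes r}$ is finite dimensional. Hence, by Hahn--Banach, for any $\eta\in V^{\otimes r}$ we have
\[
\norm{\eta}_r=\sup\{|\psi(\eta)|:\psi\in(V^{\otimes r})^*,\ \norm{\psi}_{(V^{\otimes r})^*}\le1\}.
\]
Over $\C$ the functionals are complex-linear, and the norming functional is still supplied by complex Hahn--Banach.

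The key identity is
\[
\psi\bigl(C_f^{m,r}\zeta\bigr)=(f\otimes\psi)(\zeta),\qquad \zeta\in V^{\otimes(m+r)},
\]
where $f\otimes\psi\in(V^{\otimes(m+r)})^*$ is defined on simple tensors by $(f\otimes\psi)(u\otimes\eta)=f(u)\psi(\eta)$. Both sides are linear in $\zeta$, and on simple tensors $u\otimes\eta$ both equal $f(u)\psi(\eta)$. So the identity holds on all of $V^{\otimes(m+r)}$.

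Applying axiom (iii) of Definition~\ref{def:1} gives
\[
|\psi(C_f^{m,r}\zeta)|\le\norm{f\otimes\psi}_{(V^{\otimes(m+r)})^*}\norm{\zeta}_{m+r}\le\norm{f}\,\norm{\psi}\,\norm{\zeta}_{m+r}\le\norm{\zeta}_{m+r}.
\]
Taking the supremum over $\psi$ in the unit ball of $(V^{\otimes r})^*$ yields the claim.

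There is no real analytic obstacle here. The only points needing care are:
\begin{itemize}
\item confirming that the $f\otimes\psi$ in axiom (iii) is exactly this functional on $V^{\otimes m}\otimes V^{\otimes r}$, with the tensor factors in this order (axiom (iv) on permutation invariance is not needed);
\item noting that the dual norms in (iii) are those induced by the given norms $\norm{\cdot}_k$.
\end{itemize}
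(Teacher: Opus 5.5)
Your proof is correct and follows the same route as the paper: the $r=0$ case via $|f(\zeta)|\le\norm{f}\norm{\zeta}_m$, then for $r\ge1$ testing against $\psi$ in the unit ball of $(V^{\otimes r})^*$, using $\psi(C_f^{m,r}\zeta)=(f\otimes\psi)(\zeta)$, axiom (iii), and taking the supremum over $\psi$. Your additional checks (that $C_f^{m,r}$ is well defined, that the identity holds on simple tensors, and that Hahn--Banach norming applies) are details the paper leaves implicit.
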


\begin{proof}
If $r=0$, then $$C_f^{m, r}:V^{\otimes m}\to \C.$$ 
For arbitrary $\zeta\in V^{\otimes m}$, we have 
$$\|C_f^{m, r}\zeta\|_0=|f(\zeta)|.$$ 
By $ \norm{f}\le1$, we have
\[
\frac{|f(\zeta)|}{ \|\zeta\|_m}\le \|f\|\le 1.
\]
Hence 
$$\|C_f^{m, r}\zeta\|_0=|f(\zeta)|\le  \|\zeta\|_m.$$

Let $r\ge1$.  For every $\psi\in(V^{\otimes r})^*$ with $\norm{\psi}\le1$, the dual reasonable norm property gives
\[
\norm{f\ot\psi}_{(V^{\otimes(m+r)})^*}\le \norm{f}_{(V^{\otimes m})^*}\norm{\psi}_{(V^{\otimes r})^*}\le 1.
\]
Therefore
\[
|\psi(C_f^{m, r}\zeta)|=|(f\ot\psi)(\zeta)|\le \norm{f\ot\psi}_{(V^{\otimes(m+r)})^*}\norm{\zeta}_{m+r}\le \norm{\zeta}_{m+r}.
\]
Taking the supremum over such $\psi$ gives
\[
\norm{C_f^{m, r}\zeta}_r=\sup_{\|\psi\|\le 1}|\psi(C_f^{m, r}\zeta)|\le \norm{\zeta}_{m+r}.\qedhere
\]
\end{proof}

We first dispose of the case $m=1$.  Let $a\in V$.  If $a=0$, there is
nothing to prove.  Otherwise choose $f\in V^*$ such that
\begin{equation*}
  \|f\|=1,\quad f(a)=\|a\|_1 .
\end{equation*}
Set $W=\mathbb C$ and define 
$$\Phi(v)z=f(v)z,\quad v\in V,\quad z\in\mathbb C.$$ 
Then $\Phi^{(1)}(a)$ has eigenvalue $\|a\|_1$.  Moreover, for every $n\ge1$,
\[
\Phi^{(n)}(v_1\otimes\cdots\otimes v_n)= f(v_1)\cdots f(v_n).
\]
By the dual reasonable norm property,
\[
\|f^{\otimes n}\|_{(V^{\otimes n})^*}\le \|f\|^n=1.
\]
Hence
\[
\|\Phi^{(n)}(u)\|\le \|u\|_n, \quad u\in V^{\otimes n},
\]
and therefore $\|\Phi^{(n)}\|\le1$.  Thus the required tensor-level control and the norming eigenvalue property hold in the case $m=1$.

We henceforth assume $m\ge2$.  Fix $a\in V^{\otimes m}$.  If $a=0$, no construction is needed.  Suppose $a\ne0$. By the Hahn--Banach theorem there exists a norming
functional \(f\in (V^{\otimes m})^*\) such that
\begin{equation}\label{eq:19}
\norm{f}=1, \quad f(a)=\norm{a}_m.
\end{equation}
This is obtained by multiplying a norming functional by a unimodular scalar. Define
\begin{equation*}
W:=\C\oplus V\oplus V^{\otimes 2}\oplus\cdots\oplus V^{\otimes(m-1)}
\end{equation*}
with the max norm
\begin{equation*}
\norm{(\xi_0,\xi_1,\ldots,\xi_{m-1})}_W:=\max_{0\le r\le m-1}\norm{\xi_r}_r.
\end{equation*}
For $v\in V$, define $A_v\in\End(W)$ by
\begin{equation*}
\begin{cases}
A_v\xi_0:=\xi_0v\in V,\\
A_v\xi_r:=v\ot\xi_r\in V^{\otimes(r+1)}, \quad 1\le r\le m-2, \\
A_v\xi_{m-1}:=f(v\ot\xi_{m-1})\,1\in\C.
\end{cases}
\end{equation*}
Put $\Phi(v):=A_v$.

\begin{proposition}\label{prop:4}
The map $\Phi:V\to\End(W)$ is linear and
\[
\norm{\Phi(v)}_{W\to W}\le \norm{v}_1, \quad v\in V.
\]
In particular 
$$\norm{\Phi}_{V\to\End(W)}\le1.$$
\end{proposition}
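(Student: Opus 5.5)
Linearity of $\Phi$ is immediate: each component of $A_v$ is linear in $v$, because the maps $\xi_0\mapsto \xi_0 v$ and $\xi_r\mapsto v\ot\xi_r$ are bilinear and $f$ is linear. So the work is the norm bound. Since $W$ carries the max norm, $A_v$ sends $\xi=(\xi_0,\ldots,\xi_{m-1})$ to
\[
A_v\xi=\bigl(f(v\ot\xi_{m-1}),\,\xi_0 v,\,v\ot\xi_1,\,\ldots,\,v\ot\xi_{m-2}\bigr).
\]
That is, $A_v$ shifts each block up one degree, and the top block wraps around to $\C$ through $f$. It therefore suffices to bound each output component by $\norm{v}_1\norm{\xi}_W$, and I will do this degree by degree.

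For the component in $V$, I will use $\norm{\xi_0 v}_1=|\xi_0|\,\norm{v}_1=\norm{\xi_0}_0\norm{v}_1$. For the components $v\ot\xi_r$ with $1\le r\le m-2$, the reasonable tensor property (ii) of Definition~\ref{def:1} gives $\norm{v\ot\xi_r}_{r+1}\le\norm{v}_1\norm{\xi_r}_r$. For the wrap-around component, I will use $\norm{f}=1$ from \eqref{eq:19} together with (ii) once more:
\[
|f(v\ot\xi_{m-1})|\le\norm{v\ot\xi_{m-1}}_m\le\norm{v}_1\norm{\xi_{m-1}}_{m-1}.
\]
Equivalently, this is Proposition~\ref{prop:3} with $r=0$ applied to $\zeta=v\ot\xi_{m-1}$.

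Taking the maximum over the components gives $\norm{A_v\xi}_W\le\norm{v}_1\max_r\norm{\xi_r}_r=\norm{v}_1\norm{\xi}_W$, hence $\norm{\Phi(v)}\le\norm{v}_1$ and $\norm{\Phi}\le1$. The only point that needs care is the wrap-around component, where the bound must come from the normalization $\norm{f}=1$ rather than from any multiplicativity of $f$. With this single-step bound in hand, the multilevel bound \eqref{eq:4} for $\Phi^{(n)}$ is a separate matter and will presumably need the contraction Proposition~\ref{prop:3} later.
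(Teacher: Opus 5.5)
Your proof is correct and follows the paper's argument essentially verbatim. You get linearity componentwise in $v$; you bound each block of the weighted cyclic shift by $\norm{v}_1\norm{\xi}_W$, using property (ii) for the shifted blocks and $\norm{f}=1$ together with (ii) for the wrap-around block; then you take the maximum. The only cosmetic difference is that the paper handles the $\C\to V$ block together with the blocks $V^{\otimes r}\to V^{\otimes(r+1)}$ via (ii), whereas you treat it separately by homogeneity of the norm.
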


\begin{proof}
We first show that the map
\[
\Phi:V\longrightarrow \End(W),\quad v\mapsto A_v,
\]
is linear. Let 
$$\xi=(\xi_0,\ldots,\xi_{m-1})\in W.$$ 
By the definition of $A_v$, each component of $A_v\xi$ is linear in $v$. Hence, for all $u,v\in V$ and $\alpha,\beta\in\C$,
\[
A_{\alpha u+\beta v}\xi
=
\alpha A_u\xi+\beta A_v\xi .
\]
Since this holds for every $\xi\in W$, we have
\[
A_{\alpha u+\beta v}=\alpha A_u+\beta A_v.
\]
Therefore $\Phi$ is linear.
Notice that 
$$\norm{\xi}_W=\max_{0\le r\le m-1}\norm{\xi_r}_r.$$
For $0\le r\le m-2$, 
\[
\|A_v\xi_r\|_{r+1}=\|v\ot \xi_r\|_{r+1}\le \|v\|_{1}\|\xi_r\|_{r}.
\]
For $r=m-1$, 
\[
|A_v\xi_{m-1}|=|f(v\ot\xi_{m-1})|\le\|f\|\, \|v\ot\xi_{m-1}\|_{m}\overset{\|f\|=1}{\le}  \|v\|_{1}\|\xi_{m-1}\|_{m-1}.
\]
Hence 
$$\|A_v\xi\|_{W}\le  \|v\|_{1}\|\xi\|_{W},$$
and so
\[
\norm{\Phi(v)}_{W\to W}=\norm{A_v}_{W\to W}=\sup_{\|\xi\|_{W}\ne 0}\frac{\|A_v\xi\|_{W}}{\|\xi\|_{W}}\le \|v\|_{1}.
\]
In particular, 
\[
\norm{\Phi}_{V\to\End(W)}=\sup_{\|v\|_{1}\ne 0}\frac{\norm{\Phi(v)}_{W\to W}}{\|v\|_{1}}\le 1. \qedhere
\]
\end{proof}

The next result is the key point for arbitrary reasonable tensor norms.

\begin{theorem}\label{thm:2}
For every $n\ge1$, define the linear map
\[
\Phi^{(n)}:V^{\otimes n}\to\End(W), \quad v_1\otimes \cdots \otimes v_n \mapsto A_{v_1}\cdots A_{v_n}.
\]
Then the map $\Phi^{(n)}$ satisfies
\[
\norm{\Phi^{(n)}}_{V^{\otimes n}\to\End(W)}\le1.
\]
Moreover, the extensions are multiplicative in the sense that
\[
\Phi^{(n+k)}(u\otimes z)=\Phi^{(n)}(u)\Phi^{(k)}(z), \quad u\in V^{\otimes n},\ z\in V^{\otimes k}.
\]
\end{theorem}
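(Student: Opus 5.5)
The plan is to find a closed formula for $\Phi^{(n)}(\zeta)$ that is valid for \emph{every} $\zeta\in V^{\otimes n}$, not only for simple tensors. Once we have it, the norm bound follows from Proposition~\ref{prop:3}, the dual property (iii), permutation invariance (iv) and the cross-norm property (ii). First note that $\Phi^{(n)}$ is well defined by the universal property, since $(v_1,\dots,v_n)\mapsto A_{v_1}\cdots A_{v_n}$ is multilinear. Multiplicativity is then immediate. On simple tensors, $\Phi^{(n+k)}(u\otimes z)=\Phi^{(n)}(u)\Phi^{(k)}(z)$ is just associativity of composition, and both sides are bilinear in $(u,z)$, so the identity extends to all $u,z$.

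For the formula, I trace $A_{v_1}\cdots A_{v_n}$ acting on a homogeneous component $\xi_r\in V^{\otimes r}$ of $W$, with $0\le r\le m-1$. $A_{v_n}$ acts first and produces $v_n\otimes\xi_r$. Each further $A_{v_k}$ prepends $v_k$ on the left. This continues until the degree would reach $m$, at which point $f$ is applied to the whole degree-$m$ tensor and the result collapses to a scalar in $\C$. The cycle then restarts from degree $0$ (the case $\xi_0$ is consistent, since $A_v\xi_0=\xi_0 v$).

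Write $n+r=qm+s$ with $0\le s\le m-1$. The claim is
\[
\Phi^{(n)}(v_1\otimes\cdots\otimes v_n)\xi_r=\bigl(\Id_{V^{\otimes s}}\otimes f^{\otimes q}\bigr)(v_1\otimes\cdots\otimes v_n\otimes\xi_r)\in V^{\otimes s}.
\]
Here $f^{\otimes q}$ contracts the rightmost $qm$ tensor factors block by block, in the order in which they fill up. I will verify this by induction on $n$ (or on $q$). The right-hand side is linear in the simple tensor, so by linearity
\[
\Phi^{(n)}(\zeta)\xi_r=\bigl(\Id\otimes f^{\otimes q}\bigr)(\zeta\otimes\xi_r)\qquad\text{for all }\zeta\in V^{\otimes n}.
\]
I expect the main obstacle to be this bookkeeping: getting the order of the blocks right and handling the boundary cases $r=0$ and $s=0$. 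In particular, when $s=0$ the output lies in the $\C$-component.

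Now the estimate. A permutation operator moves the last $qm$ factors to the front; by (iv) it is an isometry. After it, the contraction $\Id\otimes f^{\otimes q}$ becomes $C_{g}^{qm,s}$, where $g$ is $f^{\otimes q}$ composed with the corresponding permutation of the blocks (or simply $f^{\otimes q}$, since the blocks are contiguous). Property (iii) gives $\|f^{\otimes q}\|\le\|f\|^q=1$, so Proposition~\ref{prop:3} applies and, together with (ii),
\[
\|\Phi^{(n)}(\zeta)\xi_r\|_s\le\|\zeta\otimes\xi_r\|_{n+r}\le\|\zeta\|_n\|\xi_r\|_r .
\]

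Finally, for general $\xi=(\xi_0,\dots,\xi_{m-1})\in W$, the component $\xi_r$ is sent into degree $s(r)\equiv n+r \pmod m$. The map $r\mapsto s(r)$ is a bijection of $\{0,\dots,m-1\}$, so distinct components never add together. Therefore
\[
\|\Phi^{(n)}(\zeta)\xi\|_W=\max_r\|\Phi^{(n)}(\zeta)\xi_r\|_{s(r)}\le\|\zeta\|_n\|\xi\|_W ,
\]
which gives $\|\Phi^{(n)}\|\le 1$.
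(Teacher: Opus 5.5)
Your proposal is correct and follows essentially the same route as the paper. Both arguments trace the cyclic action on each component $V^{\otimes r}$ and write $n+r=qm+s$. Both represent $\xi_r\mapsto\Phi^{(n)}(\zeta)\xi_r$ as the isometric permutation moving the first $s$ factors to the end, followed by a contraction. Both then conclude via Proposition~\ref{prop:3}, the cross-norm property and the max norm on $W$.

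The differences are cosmetic. You contract once with $f^{\otimes q}$, whose norm is at most $1$ by iterating property (iii) of Definition~\ref{def:1}, and apply Proposition~\ref{prop:3} in degree $qm$. The paper instead composes $q$ contractions $C_f^{m,\cdot}$ and treats $q=0$ separately. You also state explicitly that $r\mapsto s(r)$ is a bijection, which the paper uses only implicitly in the max-norm step.
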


\begin{proof}
The algebraic extension is forced by multilinearity.  For a simple tensor $u=v_1\otimes\cdots\otimes v_n$, set
\[
\Phi^{(n)}(u):=A_{v_1}\cdots A_{v_n},
\]
and extend linearly. The displayed multiplicativity is immediate on simple tensors and hence follows by linearity.

It remains to prove the norm estimate.  Fix $0\le r\le m-1$ and restrict the operator $\Phi^{(n)}(u)$ to the homogeneous component $V^{\otimes r}$ of $W$. Put
\[
  s\equiv n+r\pmod m, \quad 0\le s\le m-1 .
\]
So there exists an integer $q$ such that $n+r=qm+s$. If $q=0$, then $n+r=s<m$. By the definition of $\Phi^{(n)}$, for any $u\in V^{\otimes n}$ and $\xi\in V^{\otimes r}$, 
$$\Phi^{(n)}(u)\xi=u\otimes \xi.$$
Therefore 
\[
\|\Phi^{(n)}(u)\xi\|_s= \|u\otimes \xi\|_{n+r}\le \norm{u}_n\norm{\xi}_r.
\]
Suppose $q\ne 0$. Let 
$$P_{n+r,s}:V^{\otimes(n+r)}\to V^{\otimes(n+r)}$$ 
be the permutation operator which moves the first $s$ tensor factors to the end. Thus, on pure tensors,
\[
P_{n+r,s}(w_1\otimes\cdots\otimes w_{n+r})=w_{s+1}\otimes\cdots\otimes w_{n+r}\otimes w_1\otimes\cdots\otimes w_s .
\]
By Definition~\ref{def:1}, $P_{n+r,s}$ is an isometry. Define 
$$B_{n, r}:V^{\otimes(n+r)}\to V^{\otimes s}$$ 
by
\[
B_{n, r}(h):=C_f^{m, s}\circ C_f^{m, m+s}\circ \cdots \circ C_f^{m, (q-1)m+s}\circ P_{n+r, s}(h).
\]
For arbitrary simple tensor $u=u_1\otimes\cdots\otimes u_n$ and $\xi\in V^{\otimes r}$, we obtain 
\[
\Phi^{(n)}(u)\xi=f(u_{s+1}\otimes\cdots \otimes u_{s+m})\cdots f(u_{n-(m-r-1)}\otimes\cdots \otimes u_{n}\otimes \xi)u_1\otimes \cdots \otimes u_s =B_{n, r}(u\otimes \xi).
\]
Thus, for any $u\in V^{\otimes n},\ \xi\in V^{\otimes r}$, the operator $\Phi^{(n)}(u)$ is represented by $\xi\mapsto B_{n,r}(u\otimes \xi)$. Hence  
\begin{align*}
\|\Phi^{(n)}(u)\xi\|_s=&\  \|B_{n, r}(u\otimes \xi)\|_s\\
=&\   \|C_f^{m, s}\circ C_f^{m, m+s}\circ \cdots \circ C_f^{m, (q-1)m+s}\circ P_{n+r, s}(u\otimes \xi)\|_s\\
\le&\   \|C_f^{m, s} \|\, \|C_f^{m, m+s} \|\cdots \|C_f^{m, (q-1)m+s} \|\,\|P_{n+r, s}(u\otimes \xi)\|_{n+r}\\
=&\   \|C_f^{m, s} \|\, \|C_f^{m, m+s} \|\cdots \|C_f^{m, (q-1)m+s} \|\,\|u\otimes \xi\|_{n+r}\\
\le&\  \|u\otimes \xi\|_{n+r} \hspace{1cm}  (\text{By Proposition~\ref{prop:3}})\\
\le&\  \norm{u}_n\norm{\xi}_r.
\end{align*}
Let
\[
\xi=(\xi_0,\ldots,\xi_{m-1})\in W .
\]
For each $r$, the action of $\Phi^{(n)}(u)$ on the component $\xi_r\in V^{\otimes r}$ is bounded by $\norm{u}_n\norm{\xi_r}_r$. Therefore, using the max norm on $W$, we obtain
\[
\norm{\Phi^{(n)}(u)\xi}_W=\max_{0\le r\le m-1}\norm{\Phi^{(n)}(u)\xi_r}_{s(r)}\le \norm{u}_n \max_{0\le r\le m-1}\norm{\xi_r}_r =\norm{u}_n\norm{\xi}_W .
\]
Taking the supremum over all $\xi\in W$ with $\norm{\xi}_W\le1$ gives
\[
\norm{\Phi^{(n)}(u)}_{W\to W}=\sup_{\norm{\xi}_W\ne 0}\frac{\norm{\Phi^{(n)}(u)\xi}_W}{\norm{\xi}_W}\le \norm{u}_n. 
\]
Hence
\[
\norm{\Phi^{(n)}}_{V^{\otimes n}\to\End(W)}=\sup_{\norm{u}_n\ne 0}\frac{\norm{\Phi^{(n)}(u)}_{W\to W}}{\norm{u}_n}\le 1. 
\]
This proves the desired bound.
\end{proof}

\begin{proposition}\label{prop:5}
Let $a\in V^{\otimes m}$ and let $\Phi$ be the cyclic development constructed from a norming functional $f$ satisfying \eqref{eq:19}.  Then
\[
\Phi^{(m)}(a)1=\norm{a}_m\,1.
\]
In particular, $$\norm{a}_m\in\spec(\Phi^{(m)}(a)).$$
\end{proposition}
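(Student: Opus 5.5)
The plan is to compute $\Phi^{(m)}(a)$ directly on the distinguished vector $1\in\C=V^{\otimes 0}\subset W$, first for simple tensors and then by linearity. Take $a=v_1\otimes\cdots\otimes v_m$. By the definition of $\Phi^{(m)}$ in Theorem~\ref{thm:2},
\[
\Phi^{(m)}(a)1=A_{v_1}A_{v_2}\cdots A_{v_m}1 .
\]
We evaluate this product from the right. $A_{v_m}$ sends $1\in\C$ to $v_m\in V$. Each subsequent $A_{v_j}$ with $j=m-1,\ldots,2$ acts on the component in $V^{\otimes(m-j)}$, where $m-j\le m-2$, and prepends $v_j$. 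This produces $v_2\otimes\cdots\otimes v_m\in V^{\otimes(m-1)}$. Finally, $A_{v_1}$ acts on the top component $V^{\otimes(m-1)}$ and returns $f(v_1\otimes v_2\otimes\cdots\otimes v_m)\,1\in\C$. So on simple tensors
\[
\Phi^{(m)}(v_1\otimes\cdots\otimes v_m)1=f(v_1\otimes\cdots\otimes v_m)\,1 .
\]
This agrees with the case $n=m$, $r=0$ (so $q=1$, $s=0$) in the proof of Theorem~\ref{thm:2}. There $P_{m,0}$ is the identity and $B_{m,0}=C_f^{m,0}=f$.

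Both sides of the identity are linear in the tensor argument. The map $u\mapsto\Phi^{(m)}(u)1$ is linear because $\Phi^{(m)}$ is the linear extension, and $u\mapsto f(u)1$ is linear because $f$ is. Since $V$ is finite dimensional, every element of $V^{\otimes m}$ is a finite sum of simple tensors, so the identity extends to all $u\in V^{\otimes m}$. Taking $u=a$ and using \eqref{eq:19}, we get $\Phi^{(m)}(a)1=f(a)1=\norm{a}_m1$.

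Because $1\neq0$ in $W$, this shows that $\norm{a}_m$ is an eigenvalue of the finite-dimensional operator $\Phi^{(m)}(a)$, with eigenvector $1$.

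The main point needing care is the bookkeeping of degrees in the composition. Since $m\ge2$, we have to confirm that the intermediate vectors never reach the top component prematurely. Starting from degree $0$, after $k$ applications the vector sits in degree $k$. The wrap-around rule via $f$ is triggered only at the $m$-th application, when the input has degree $m-1$. So exactly one application of $f$ occurs and no other component of $W$ contributes.
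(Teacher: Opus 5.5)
Your proposal is correct and follows the paper's proof essentially verbatim: you evaluate $A_{v_1}\cdots A_{v_m}1$ from the right to get $f(v_1\otimes\cdots\otimes v_m)\,1$, extend by linearity, and apply $f(a)=\norm{a}_m$. Your degree bookkeeping, including the vacuous middle step when $m=2$, is a useful expansion of the paper's one-line ``operators act from right to left.''
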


\begin{proof}
For the pure tensor $v_1\ot\cdots\ot v_m$, we have
\[
\Phi^{(m)}(v_1\ot\cdots\ot v_m)=A_{v_1}\cdots A_{v_m}.
\]
Since operators act from right to left,
\[
A_{v_1}\cdots A_{v_m}1=f(v_1\ot\cdots\ot v_m)1.
\]
Linearity gives $\Phi^{(m)}(u)1=f(u)1$ for every $u\in V^{\otimes m}$.  Taking $u=a$ and using \eqref{eq:19} proves the claim.
\end{proof}

\begin{remark}  
The preceding construction was written over $\C$ .  If $\K=\R$, choose a real norming functional $f\in(V^{\otimes m})^*$ with $\norm{f}=1$ and $f(a)=\norm{a}_m$.  The same construction gives a real finite dimensional Banach space
\[
W_\R:=\R\oplus V\oplus\cdots\oplus V^{\otimes(m-1)}
\]
and real operators $A_v$ with the same tensor-level bounds and the eigenvector identity $\Phi^{(m)}(a)1=\norm{a}_m1$. After complexifying \(W_\R\) and extending the operators complex linearly, we obtain a complex development with the same tensor-level bound \(C=1\).  Therefore the argument applies equally in the real case.
\end{remark}

\subsection{Proof of the main theorem}
This subsection devotes to the proof of the main result.

\begin{theorem}\label{thm:3}
Let $V$ be a finite dimensional Banach space over $\R$ or $\C$, and equip the algebraic tensor powers $V^{\otimes n}$ with a sequence of reasonable tensor algebra norms $\norm{\cdot}_n$.  Let
\[
\X_t=\exp(tl),\quad 0\le t\le 1,
\]
be a pure $m$-rough path with $l\in \mathcal  L^{(m)}(V)$.  Then
\[
L_m(\X)=\norm{\pi_m(l)}_m.
\]
Equivalently,
\[
\limsup_{n\to\infty}\left(\left(\frac n m\right)! \norm{\pi_n(\exp(l))}_n\right)^{m/n}=\norm{\pi_m(l)}_m.
\]
\end{theorem}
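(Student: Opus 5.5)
The plan is to combine the two inequalities already assembled in the paper. The upper bound $L_m(\X)\le\norm{\pi_m(l)}_m$ is exactly Lemma~\ref{prop:1}, so only the reverse inequality needs work. Write $a:=l_m=\pi_m(l)$. If $a=0$ the lower bound $L_m(\X)\ge 0$ is trivial, since $L_m(\X)$ is a limsup of nonnegative quantities. So I assume $a\ne0$.

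First I treat the complex case, taking the case $m=1$ and the case $m\ge2$ separately. For $m=1$ I use the one-dimensional development $\Phi(v)z=f(v)z$ built above from a norming functional of $a$. It satisfies $\|\Phi^{(n)}\|\le1$ and $\Phi^{(1)}(a)=\|a\|_1$. For $m\ge2$ I take the cyclic development on $W=\C\oplus V\oplus\cdots\oplus V^{\otimes(m-1)}$ from a norming functional $f$ satisfying \eqref{eq:19}. Here $W$ is finite dimensional because $V$ is. By Theorem~\ref{thm:2}, condition \eqref{eq:4} holds with $C=1$. By Proposition~\ref{prop:5}, $\|a\|_m$ is an eigenvalue of $\Phi^{(m)}(a)$. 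Proposition~\ref{prop:2} then gives
\[
L_m(\X)\ge \sup\{\Ree\lambda:\lambda\in\spec(\Phi^{(m)}(a))\}\ge \norm{a}_m .
\]
Together with Lemma~\ref{prop:1}, this proves the equality.

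In the real case I invoke the Remark. Choose a real norming functional, build the real cyclic development, and complexify $W_\R$. I then need to check two points: that the complexified operators still satisfy \eqref{eq:4} with some constant $C$, and that Proposition~\ref{prop:2} applies with a complex $W$ over a real $V$. This checking is the main obstacle. The complexification norm may inflate the bound. If I equip $W_\R\otimes\C$ with a norm such as $\sup_\theta\|\Re(e^{i\theta}w)\|$, the real operator bound $\le1$ transfers to an operator bound $\le1$, because complex-linear extensions of real operators commute with rotation by $e^{i\theta}$ and with taking real parts.

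Alternatively, I can avoid complexification altogether. In the proof of Proposition~\ref{prop:2}, the lower bound \eqref{eq:18} only needs a real eigenvalue branch: $\|\exp(\rho^mT(\rho))\|\ge e^{\rho^m\lambda(\rho)}$ whenever $T(\rho)$ has a real eigenvalue $\lambda(\rho)$. Since $\|a\|_m$ is a real eigenvalue, the remaining issue is that a simple-root argument gives a real perturbed branch, but multiplicity may spoil this. To handle that, I would instead use the spectral radius: $\|\exp(\rho^m T)\|\ge$ spectral radius of $\exp(\rho^mT)=\exp(\rho^m\max\Ree\spec T(\rho))$. Eigenvalues of $T(\rho)$ over $\C$ depend continuously on $\rho$, so some eigenvalue tends to $\|a\|_m$ even though $T$ is real. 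This yields the bound without any norm on the complexification.

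Finally, the equivalent formulation follows from Lemma~\ref{lem:2}: the signature is $\exp(l)$, so $X^n_{0,1}=\pi_n(\exp(l))$.
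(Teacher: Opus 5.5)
Your proposal is correct and follows the paper's proof. The upper bound comes from Lemma~\ref{prop:1}, and the lower bound comes from Proposition~\ref{prop:2} applied to the cyclic development with $C=1$ (Theorem~\ref{thm:2}) and the norming eigenvalue $\norm{l_m}_m$ (Proposition~\ref{prop:5}), with the real case handled through the Remark. The remaining differences are minor. For $m=1$ the paper computes directly from $\pi_n(S(\X))=l_1^{\otimes n}/n!$ and the crossnorm identity $\norm{l_1^{\otimes n}}_n=\norm{l_1}_1^n$, rather than using the one-dimensional development. In the real case, your explicit complexification norm $\sup_\theta\norm{\Ree(e^{i\theta}w)}$, or your spectral-radius bypass via $\norm{B}\ge r(B)$ for real operators, supplies a detail that the paper's Remark leaves implicit.
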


\begin{proof}
Write
\[
l=l_1+l_2+\cdots+l_m, \quad l_r\in \mathcal{L}_r(V)\subset V^{\otimes r}.
\]
If $l_m=0$, the upper bound in Lemma~\ref{prop:1} gives
\[
0\le L_m(\X)\le \norm{l_m}_m=0,
\]
and so the result follows.

Assume $l_m\ne0$. The case $m=1$ is immediate. By the crossnorm consequence of Definition~\ref{def:1}, we have 
$$\|l_1^{\otimes n}\|_n=\|l_1\|_1^n.$$
Since
\[
\pi_n(S(\mathbf X))=\frac{l_1^{\otimes n}}{n!},
\]
we obtain 
$$L_1(\mathbf X)=\|l_1\|_1.$$
For $m\ge2$, construct the cyclic development $\Phi$ associated with a norming functional of $l_m$.  By Theorem~\ref{thm:2}, it satisfies the tensor-level bound \eqref{eq:4} with $C=1$. By Proposition~\ref{prop:5},
\[
\norm{l_m}_m\in\spec(\Phi^{(m)}(l_m)).
\]
Since this eigenvalue is real and positive, Proposition~\ref{prop:2} yields
\[
L_m(\X)\ge \norm{l_m}_m.
\]
Together with the upper bound Lemma~\ref{prop:1}, this gives
\[
  L_m(\X)=\norm{l_m}_m=\norm{\pi_m(l)}_m.
\]
Finally, for a pure rough path $\X_t=\exp(tl)$, the signature is $S(\X)=\exp(l)$. Therefore the displayed algebraic formulation follows immediately.
\end{proof}

\vskip 0.2in

\noindent
{\bf Acknowledgments.} This work is supported by the National Natural Science Foundation of China (12571019), the Natural Science Foundation of Gansu Province (25JRRA644) and Innovative Fundamental Research Group Project of Gansu Province (23JRRA684). 
We thank Xi Geng for illuminating discussions.

\noindent
{\bf Declaration of interests. } The authors have no conflicts of interest to disclose.

\noindent
{\bf Data availability. } Data sharing is not applicable as no new data were created or analyzed.

\end{document}